% 10/7/08
% revision of section 2-preliminaries-9/29 JG
% (already some revisions 10/01/08 JG)
% 10/2/08 JG
%revision of 2/18
%2/18
% 6/09/08 LK and JG
% 6/10/08
% 10/08/08
% 10/20/08 JG
% 11/ 21/08 JG
% 12/08/08 JG 2:36 PM.
% 12/09/08 JG 11 AM
%Maybe 12/19/08??
% 12/28/08 JG AM
% 1/10/09 9:30 AM; JG
% 01/11/09 JG AM
% 01/12/09 JG 12:20 PM
% 01/17/09 JG
% 01/27/09 JG
% 01/28/09 JG
% %01/29/09 JG
% 02/16/09  LK
%2/26/09LK
%3/2/09
%6/4/09LK incorporating JK %3/14/09 JG and
% 5/3/09 JG
% 6/06/09 JG
%6/06/09
% 6/07/09 JG
%6/21/09 JG--- FINAL revision of that date called enum6-21
%6/23/09 6/24/09 LK
% 6/25/09 JG
% 6/26/09 JG
%6/26/09 LK figure added
\documentclass{amsart}
\usepackage{amssymb}
\usepackage{url}
\usepackage{graphicx}
\usepackage{comment}

\theoremstyle{plain}

\newtheorem{theorem}{Theorem}[section]
\newtheorem{lemma}[theorem]{Lemma}

\newtheorem{cor}[theorem]{Corollary}

\newtheorem{thm}[theorem]{Theorem}
\newtheorem{defn}{Definition}
\newtheorem{remark}{Remark}[section]
\newtheorem{Example}{Example}

\def\F{\mathcal F}

\def\QQ{\mathbb Q}

\title[Enumerating]{Enumerating Palindromes and Primitives in Rank Two
 Free Groups  }

\author{Jane Gilman and Linda Keen }

\address{Department of  Mathematics, Rutgers University, Newark, NJ
 07079} \email{gilman@rutgers.edu} \thanks{Rutgers Research Council, Yale University, \& NSF}
\address{Department of  Mathematics,  CUNY, Lehman College and Graduate
Center,  Bronx, 10468} \email{linda.keen@lehman.cuny.edu}
\thanks{PSC-CUNY research grant}
%\subjclass[2000]{}

\date{6/25/09}

\begin{document}

\begin{abstract}

Let $F= \langle a,b \rangle$ be a rank two free group. A word
$W(a,b)$ in $F$  is {\sl primitive} if it, along with another group
element, generates the group. It is a {\sl palindrome} (with respect
to $a$ and $b$) if it reads the same forwards and backwards. It is
known that in a rank two free group any primitive element is
conjugate either to a palindrome or to the product of two
palindromes, but known iteration schemes for all primitive words
give only a representative for the conjugacy class. Here we derive a
new iteration scheme that gives either the unique palindrome in the
conjugacy class or expresses the word as a unique product of two
unique palindromes. We denote these words by $E_{p/q}$ where $p/q$
is rational number expressed in lowest terms. We prove that $E_{p/q}$
is a palindrome if $pq$ is even and the unique product of two unique
palindromes if $pq$ is odd. We prove that the pairs
$(E_{p/q},E_{r/s})$ generate the group when $|ps-rq|=1$. This
improves the previously known result that held only for $pq$ and
$rs$ both even. The derivation of the enumeration scheme also gives
a new proof of the known results about primitives.

\end{abstract}
\maketitle
\section{Introduction}

It is well known that up to conjugacy  primitive generators in a
rank two free group can be indexed by the rational numbers
  and that pairs of primitives that
generate the group can be obtained by the number theory of the Farey
tessellation of the hyperbolic plane.
It is also well known that up to conjugacy a primitive word can
always be written as either a palindrome or a  product of two
 palindromes and that certain pairs of palindromes will generate the
group. \cite{BShpilT, Piggott}

 In this paper we give  new proofs of the above results. The proofs yield a
 new enumerative scheme for conjugacy
classes of primitive words, still indexed by the rationals (Theorem
\ref{thmmain} and Theorem~\ref{thma4}). We denote
the words representing each conjugacy class by $E_{p/q}$. In
addition to proving that the enumeration scheme gives a unique
representative for each conjugacy class containing a primitive, we
prove that the words in this scheme are either palindromes or the
canonically defined product  of a pair of palindromes that have
already appeared in the scheme and thus
give a new proof of the palindrome/product result. Further we show
that pairs of these words $(E_{p/q}, E_{r/s})$ are primitive pairs
that generate the group
 if and only of
$|pq-rs|= 1$ (Theorem~\ref{thma5}). This improves the previous known
result that held only for pairs of palindromes.

Pairs of primitive generators arise in the discreteness algorithm
for $PSL(2,\mathbb{R})$ representations of two generator groups
\cite{G,G1,GM}. This enumerative scheme will be useful in extending
discreteness criteria to $PSL(2, \mathbb{C})$ representations where
the hyperbolic geometry of palindromes plays an important role
\cite{GKgeom}. Here we use the discreteness algorithm and its
relation to continued fractions as described in \cite{GKwords},  and
its relation to the Farey tessellation of the hyperbolic plane, to
find the enumeration scheme and to prove that it actually enumerates
all primitives and all primitive pairs.

\section{The Main Result}

We are able to state and use our main result with very little
notation, only the definition of continued fractions. Namely, we let
$p$ and $q$ be relative prime integers positive integers.  Write
$$ {\frac{p}{q}}
= a_0 +{1 \over {a_1 + {1 \over {a_2 + {1 \over a_3 + \dots +{1
\over a_k }}}}}}
 =[a_0; a_1, \ldots,a_k]$$
 where the $a_i$ are integers with $a_j > 0$, $j=1\ldots k$, $a_0 \geq 0$.

\vskip .2in
 \centerline{\bf Enumeration Scheme for positive rationals}
 \vskip .1in
 Set
$$  E_{0/1}=A^{-1},  E_{1/0}= B, \mbox{ and } E_{1/1}=BA^{-1}.$$

Suppose $p/q$ has continued fraction expansion $[a_0; a_1, ..., a_{k-1},
a_k]$.  Consider the two rationals defined by the continued fractions $ [a_0;_1,...,a_{k-1}]$ and $[a_0,...,a_{k-1}, a_k-1]$.  One is smaller than
$p/q$ and the other is larger;  call the smaller one $m/n$ and the larger one $r/s$ so that $m/n < p/q <r/s$.  The induction step in the scheme is given by

\vskip .1in
 \noindent{\bf Case 1} $pq$ - odd: $$E_{p/q}=E_{r/s}E_{m/n}.$$

\noindent{\bf Case 2} $pq$ - even: $$E_{p/q} = E_{m/n}E_{r/s}.$$

\vskip .1in

We have a similar scheme for negative rationals described in
section~\ref{section:enumerating}.  With both schemes we can state
our main result as

\begin{thm}\label{thmmain} {\bf \rm {\bf (Enumeration of Primitives by Rationals)}}
Up to inverses, the primitive elements of a two generator free group
can be enumerated by the rationals  using continued fraction
expansions. The resulting words are denoted by $E_{p/q}$. In the
enumeration scheme, when $pq$ is even, $E_{p/q}$ is a palindrome,
and when $pq$ is odd, $E_{p/q}$ is a product of palindromes that
have already appeared in the scheme.  Moreover,

\begin{itemize}
\item  For $pq$ even,  $E_{p/q}$ is a palindrome.
It is cyclically reduced and the unique palindrome in its
conjugacy class.

\item For $pq$ odd, when  $E_{p/q} = E_{m/n}E_{r/s}$ both $E_{m/n}$ and
$E_{r/s}$ are palindromes;  $E_{p/q}$ is cyclically reduced.
\end{itemize}

\end{thm}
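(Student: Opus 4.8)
The proof is by strong induction on the level of $p/q$ in the Farey diagram, equivalently on the size $\sum_i a_i$ of the continued fraction expansion, and it is convenient to prove at once a statement strong enough to be self-feeding. For every positive rational $p/q$ of level at most $N$ I would carry the hypotheses: (a) $E_{p/q}$, written as prescribed by the scheme, is freely reduced and cyclically reduced; (b) if $pq$ is even then $E_{p/q}$ is a palindrome, and if $pq$ is odd then in $E_{p/q}=E_{r/s}E_{m/n}$ both factors are palindromes; (c) whenever $u/v$ and $w/z$ are Farey neighbours the pair $\{E_{u/v},E_{w/z}\}$ generates $F$; and (d) an explicit description of the first and last letter of $E_{p/q}$, which is the bookkeeping that forces the concatenations below to be cancellation-free. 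The base cases $E_{0/1}=A^{-1}$, $E_{1/0}=B$, $E_{1/1}=BA^{-1}$ are immediate, together with the parity check ($0,0$ even, $1$ odd) and the generation statement for the edges among $0/1,1/0,1/1$.

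The combinatorial backbone is the Farey, or Stern--Brocot, structure. One first checks that truncating, respectively decrementing the last partial quotient of $[a_0;a_1,\dots,a_k]$ produces exactly the two Farey parents $m/n<p/q<r/s$ of $p/q$: they satisfy $p=m+r$, $q=n+s$ and $ps-rq=\pm1$, and conversely every Farey triple arises this way. Propagating parities through the mediant operation, one sees that in any such triple the three coordinate pairs realise precisely the three types $(\mathrm{odd},\mathrm{odd})$, $(\mathrm{odd},\mathrm{even})$, $(\mathrm{even},\mathrm{odd})$; in particular $pq$ is odd if and only if $mn$ and $rs$ are both even. Hence in the odd case the parents $E_{m/n}$ and $E_{r/s}$ are already palindromes by (b), which is exactly the odd assertion of the theorem, and the free and cyclic reducedness of $E_{r/s}E_{m/n}$ follows from the first/last-letter hypothesis (d).

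In the even case exactly one parent has coordinates of odd product, and so by (b) is the product of its own two palindromic parents, while the other parent is itself a palindrome. The key geometric point is that the two endpoints of the Farey edge $\{m/n,r/s\}$ are nested, the older one being a Farey parent of the younger; using this incidence (iterated one further step, if necessary, to expand the odd parent completely) one rewrites $E_{p/q}$, in the order dictated by Case~2 of the scheme, as a concatenation $P_1P_2\cdots P_1$ of even-product words already constructed, symmetric about its centre. Thus $E_{p/q}^{\mathrm{rev}}=E_{p/q}$, so $E_{p/q}$ is a palindrome built from palindromes already appearing in the scheme; absence of cancellation and cyclic reducedness again come from (d). This verification — unwinding the scheme and the Farey incidences into a manifestly symmetric word while keeping exact track of the end letters — is where I expect the real work to lie; everything downstream is formal, and it is precisely here that the relation between the scheme and continued fractions from \cite{GKwords} does the work.

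It remains to collect the remaining assertions. In both cases $E_{p/q}$ is a product, in one order or the other, of the two parent words, and $\{E_{m/n},E_{r/s}\}$ generates $F$ by (c); hence each of $\{E_{p/q},E_{m/n}\}$ and $\{E_{p/q},E_{r/s}\}$ generates $F$, so $E_{p/q}$ is primitive and (c) is inherited by the two new Farey edges $\{m/n,p/q\}$ and $\{p/q,r/s\}$, closing the induction. That the $E_{p/q}$, together with the negative-rational words of section~\ref{section:enumerating}, run without repetition over all conjugacy classes of primitives up to inverse follows from the classical identification of those classes with the rationals via the abelianization (slope) invariant: a short computation shows $E_{p/q}$ has slope $p/q$, which gives injectivity, and surjectivity is forced by the known count. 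Finally, for $pq$ even the uniqueness of $E_{p/q}$ as the cyclically reduced palindrome in its conjugacy class follows, since conjugate cyclically reduced words are cyclic permutations of one another, from known properties of palindromes in free groups (cf. \cite{Piggott}) together with a direct analysis of which cyclic permutations of a cyclically reduced primitive word can be palindromes; since $E_{p/q}$ is primitive there is exactly one, namely $E_{p/q}$ itself. Running the same argument through the scheme also recovers the classical palindrome/product theorem and, with a little more, Theorem~\ref{thma5}.
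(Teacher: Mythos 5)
Your skeleton coincides with the paper's: induction on Farey level, the observation that in each Farey triple exactly one vertex has $pq$ odd (so that in the odd case both parents are automatically even, hence palindromes by induction, and the assertion is immediate), and, in the even case, an expansion of the odd parent through the grandparents. But the even case --- which you yourself flag as ``where the real work lies'' --- is the entire content of the theorem, and you have not carried it out: saying that $E_{p/q}$ ``rewrites as a concatenation $P_1P_2\cdots P_1$ symmetric about its centre'' is a restatement of the claim, not a verification. What the paper actually supplies at this point is (i) the explicit continued fractions of the grandparents (e.g.\ if $m/n=[a_0;\dots,a_{k-1}]$ and $r/s=[a_0;\dots,a_{k-1},a_k-1]$, the parents of $m/n$ are $u/v=[a_0;\dots,a_{k-1}-1]$ and $x/y=[a_0;\dots,a_{k-2}]$); (ii) the resulting identities, split according to which parent is odd: either $E_{p/q}=E_{m/n}(E_{w/z}E_{m/n})$, or $E_{r/s}=E_{m/n}^{\,a_k-1}E_{x/y}$ --- which requires unwinding $a_k-1$ Farey steps, not the ``one further step'' you allow for --- whence $E_{p/q}=(E_{x/y}E_{u/v})^{a_k}E_{x/y}$; and (iii) the elementary lemma that $(XY)^tX$ is a palindrome whenever $X$ and $Y$ are. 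Note also that the asymmetric ordering conventions of Cases 1 and 2 (larger fraction on the left when $pq$ is odd, on the right when $pq$ is even) are exactly what make these identities come out symmetric; your argument never invokes them, and without them the even case is false.

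Two secondary points. Cancellation-freeness is not obtained in the paper by tracking first and last letters but from Theorem~\ref{thm:primspq}: in any primitive word all exponents of each generator have a single sign, and neighboring fractions fall in the same regime, so concatenation and free reduction coincide globally. And your surjectivity argument (``forced by the known count'') imports the classical enumeration of primitive conjugacy classes that the paper is re-proving; the paper instead derives it from the winding/unwinding correspondence between $\F$-sequences and continued fractions (Corollaries~\ref{cor:taumap} and~\ref{cor:ratlprim}), so your route is not available if the result is to be independent of the classical one.
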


\begin{remark} Note that although there are several ways a word in the $pq$ odd conjugacy
class can be factored as products of palindromes, in this theorem we
specifically choose the unique factorization for $E_{p/q}$  that
makes the enumeration scheme work.
\end{remark}

In addition we have,

\begin{thm} \label{mainpos} Let $\{E_{p/q}\}$  denote the words in the
enumeration scheme for rationals.  Then if $(p/q,p'/q')$
satisfies $|pq'-qp'| =1$,  the pair $(E_{p/q}, E_{p'/q'})$ generates
the group.
\end{thm}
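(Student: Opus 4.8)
The plan is to induct on $N(p/q,\,p'/q')=p+q+p'+q'$ (fractions in lowest terms, with $0/1$ and $1/0$ permitted), exploiting the number theory of the Farey tessellation that already underpins the enumeration scheme. The hypothesis $|pq'-qp'|=1$ says precisely that $p/q$ and $p'/q'$ are Farey neighbours, that is, that they span an edge of the tessellation. The base cases are the three edges among $\{0/1,1/0,1/1\}$: here $(E_{0/1},E_{1/0})=(A^{-1},B)$, $(E_{0/1},E_{1/1})=(A^{-1},BA^{-1})$ and $(E_{1/0},E_{1/1})=(B,BA^{-1})$ each generate the group, since $B=(BA^{-1})A$ and $A^{-1}=B^{-1}(BA^{-1})$.

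For the inductive step I would argue as follows. Let $\{p/q,p'/q'\}$ be a Farey edge distinct from the three base edges. A standard property of the Farey tessellation is that one of the two fractions, say $p/q$, then has the other, $p'/q'$, as one of its two Farey parents (a neighbour of $p/q$ of strictly smaller denominator must be a parent); let $m/n$ be the other Farey parent, so that $p/q$ is the mediant of $m/n$ and $p'/q'$ and $\{m/n,p'/q'\}$ is again a Farey edge. Because $p'+q'\ge 1$,
$$N(m/n,\,p'/q') = (p-p')+(q-q')+p'+q' = p+q < p+q+p'+q' = N(p/q,\,p'/q'),$$
so by the inductive hypothesis $(E_{m/n},E_{p'/q'})$ generates the group. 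Now $m/n$ and $p'/q'$ are exactly the two continued-fraction neighbours of $p/q$ used in the scheme, so by the recursion defining the scheme (Case~1 or Case~2 according to the parity of $pq$; the order is immaterial here) the word $E_{p/q}$ is $E_{m/n}E_{p'/q'}$ or $E_{p'/q'}E_{m/n}$. In either case $E_{m/n}\in\langle E_{p/q},E_{p'/q'}\rangle$ and $E_{p/q}\in\langle E_{m/n},E_{p'/q'}\rangle$, so the two pairs generate the same subgroup; hence $(E_{p/q},E_{p'/q'})$ generates the whole group. That closes the induction for positive rationals. The scheme for negative rationals of Section~\ref{section:enumerating} is handled verbatim, using the symmetry of the Farey tessellation, and edges joining a positive index to a negative one are covered because $0/1$ and $1/0$ sit on both sides.

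The routine ingredients are the elementary arithmetic of Farey neighbours — that a neighbour of $p/q$ of strictly smaller denominator is one of its two Farey parents, so every non-base edge has one endpoint a Farey parent of the other — and the classical fact that the convergents $[a_0;a_1,\dots,a_{k-1}]$ and $[a_0;a_1,\dots,a_{k-1},a_k-1]$ are precisely the two Farey parents of $[a_0;a_1,\dots,a_k]$; both are presumably already in place from the proof of Theorem~\ref{thmmain}. The only real content is the observation that each step of the recursion replaces one member of a generating pair by its product with the other, a move that preserves the subgroup generated. Consequently the main obstacle is organisational: picking an induction parameter that provably decreases along the Farey descent, which is why one should use the total $p+q+p'+q'$ rather than $\max(q,q')$ (the latter fails to drop on edges incident to $1/0$).
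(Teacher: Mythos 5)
Your proof is correct and rests on the same two ingredients as the paper's proof of Theorem~\ref{thma5}: induction over the Farey tessellation starting from the level-one triangle $\{0/1,1/0,1/1\}$, plus the observation that the recursion $E_{p/q}=E_{m/n}E_{r/s}$ or $E_{r/s}E_{m/n}$ is a Nielsen move, so $\langle E_{p/q},E_{p'/q'}\rangle=\langle E_{m/n},E_{p'/q'}\rangle$. The one organizational difference is worth noting: the paper inducts on $\max(\mathrm{Lev}(p/q),\mathrm{Lev}(p'/q'))$ and therefore needs a separate case for neighbors $p'/q'$ of $p/q$ that are \emph{not} parents (those of the form $(tp+m)/(tq+n)$), which it handles by iterating the mediant argument $t$ times. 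You eliminate that case by first orienting every non-base edge so that one endpoint is a parent of the other and then choosing the parameter $p+q+p'+q'$, which provably drops to $p+q$ at each step; this is a genuine streamlining, and your justification is essentially right, though the parenthetical ``a neighbour of strictly smaller denominator must be a parent'' should really be phrased in terms of $p+q$ (or Farey level) rather than the denominator alone, since a non-parent neighbor can share a denominator with $p/q$ (e.g.\ $2/1$ and $3/1$) and the edges through $1/0$ need the same tie-break; these are one-line fixes. Your treatment of negative rationals by reflection and of the mixed-sign edges through $0/1$ and $1/0$ matches the paper's (equally brief) treatment.
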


 These theorems will be proved in
section \ref{section:enumerating}. In order to prove the theorems and
the related results we need to review some terminology and
background.

\section{Preliminaries}

 The main object here is a two generator free
group which we denote by $F = \langle a,b \rangle$. A word $W=W(a,b)
\in F$ is, of course, an expression of the form
\begin{equation} \label{eq:form} a^{m_1}b^{n_1}a^{m_2} \cdots b^{n_r} \end{equation}
 for some set of $2r$  integers
$m_1,...,m_r,n_1,...,n_r$ with  $m_2,...,m_r, n_1,...,n_{r-1}$
  non-zero. The
expression $W(c,d)$ denotes the word $W$ with $a$ and $b$ replaced
by $c$ and $d$. The expressions $W(b,a)$, $W(a^{-1},b), W(a^{-1},
b^{-1})$ and $W(a,b^{-1})$ have the obvious meaning.

\begin{defn} A word $W=W(a,b) \in F$ is {\em primitive} if there is
another word $V=V(a,b) \in F$ such that $W$ and $V$ generate $F$.
$V$ is called a {\em primitive associate} of $W$ and the unordered
pair $W$ and $V$  is called a {\em pair of primitive associates} or
a {\sl primitive pair} for short.
\end{defn}

In the next subsections we summarize terminology and facts about the Farey tessellation and continued fraction expansions for rational numbers. Details and proofs can be found in \cite{CS3, CS2, Series}.
See also \cite{Vin}.

\subsection{Preliminaries: The Farey Tessellation}
 In what follows when we
use $r/s$ to denote a rational number, we assume that $r$ and $s$
are integers with $s > 0$,  and that $r$ and $s$
are relatively prime, that is, that $(r,s)=1$. We let $\mathbb{Q}$
denote the rational numbers, but we identify the rationals with
points on the extended real axis on the Riemann sphere. We use the
notation $1/0$ to denote the point at infinity.

We  need the concept of Farey addition for
fractions.

\begin{defn} If $\frac{p}{q}, \frac{r}{s} \in \QQ$ with $|ps-qr| =1$, the {\em Farey sum} is
$$\frac{p}{q} \oplus \frac{r}{s} = \frac{p+r}{q+s}$$
Two fractions are called {\em Farey neighbors} if  $|ps-qr|=1$.
\end{defn}

When we write $\frac{p}{q} \oplus \frac{r}{s} = \frac{p+r}{q+s}$ we tacitly assume
the fractions are Farey neighbors.

\begin{remark} \label{remark:nbs} If $\frac{p}{q}< \frac{r}{s}$ then it is a simple computation to see
that
$$\frac{p}{q} < \frac{p}{q} \oplus \frac{r}{s} < \frac{r}{s}$$ and
that both pairs of fractions $$(\frac{p}{q},\frac{p}{q} \oplus
\frac{r}{s}) \mbox{  and  } (\frac{p}{q} \oplus \frac{r}{s},
\frac{r}{s})$$ are Farey neighbors if $(p/q, r/s)$ are.
 \end{remark}

   It is
easy to calculate that the Euclidean distance between finite Farey
neighbors is strictly less than one unless they are adjacent
integers. This implies that unless one of the fractions is $0/1$,
both neighbors have the same sign.

One creates the Farey diagram in the upper half-plane by marking
each fraction by a point on the real line and joining each pair of
Farey neighbors by a semi-circle orthogonal to the real line. The
point here is that because of the above properties none of the
semi-circles intersect in the upper half plane. This gives a
tessellation of the hyperbolic plane where the semi-circles joining
a pair of neighbors, together with the semi-circles joining each
member of that pair to the Farey sum  of the pair, form an ideal
hyperbolic triangle. The tessellation is called the Farey
tessellation and the vertices are precisely the points that
correspond to rational numbers.  See Figure~\ref{fig:farey}

\begin{figure}[htbp]
\begin{center}
\includegraphics[width=5in]{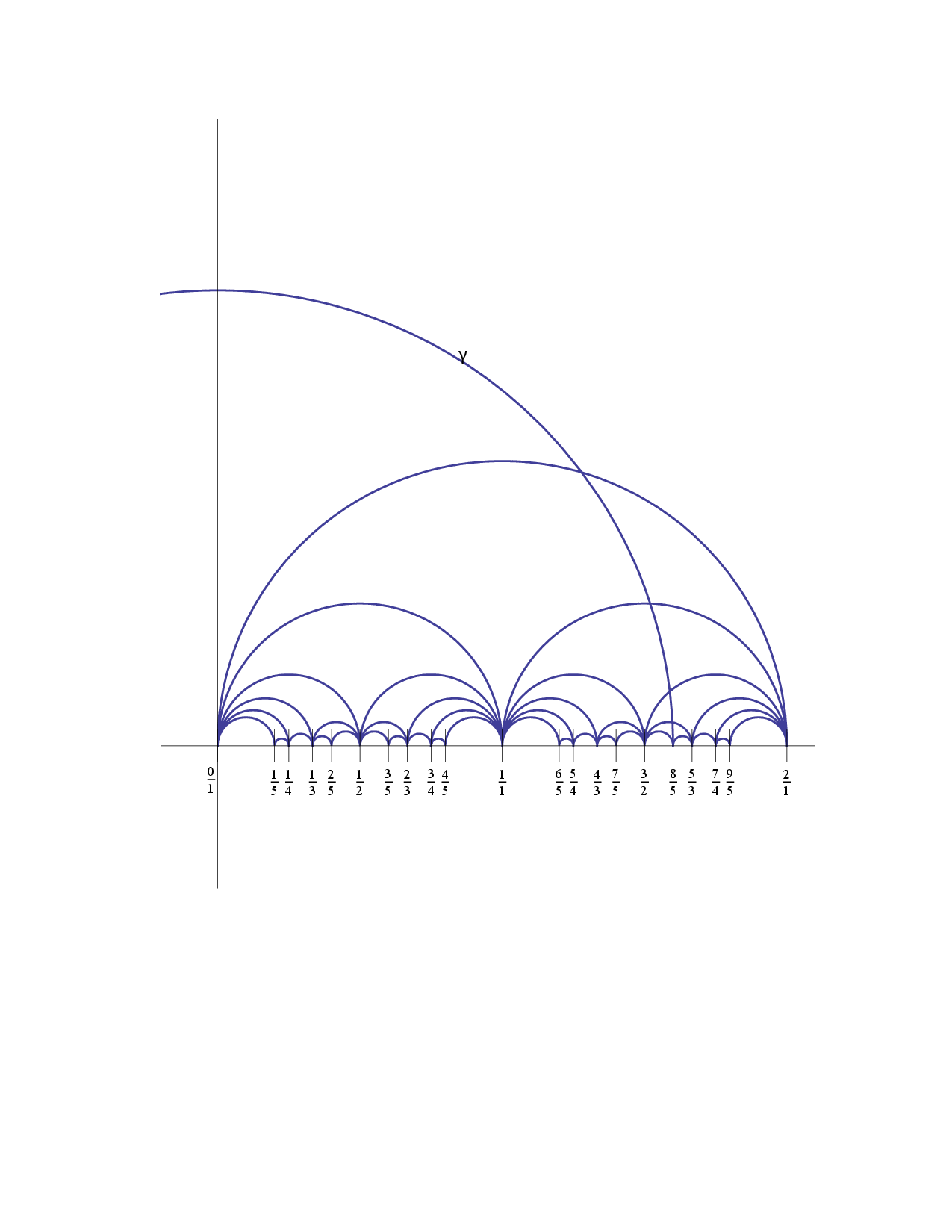}
\caption{The Farey Tesselation with the curve $\gamma$}
\label{fig:farey}
\end{center}
\end{figure}

The Farey tessellation is invariant under the semi-group generated
by $z \mapsto z+1$ and $z \mapsto 1/z$.

Fix  any point $\zeta$ on the positive imaginary axis. Given a
fraction, ${\frac{p}{q}}$, there is an oriented  hyperbolic geodesic $\gamma$
connecting  $\zeta$ to ${\frac{p}{q}}$. We assume $\gamma$ is oriented so that moving from $\zeta$ to a positive rational is the positive direction. This geodesic $\gamma$ will intersect some number of
triangles.

\begin{defn} The {\em Farey level} or the {\em level} of $p/q$,  denoted by $Lev(p/q)$, is the number of
triangles
traversed by $\gamma$ \end{defn}

Note that our definition implies $Lev(p/q)=Lev(-p/q)$.

The geodesic $\gamma$ will enter any given triangle along an edge.
This edge will connect two vertices and $\gamma$ will exit the
triangle along an edge connecting one of these two vertices and the
third vertex of the triangle, the {\sl new} vertex. Since $\gamma$
is oriented, the edge through which $\gamma$ exits a triangle is
either a left edge or a right edge depending upon whether the  edge
$\gamma$ cuts is to the right or left of the new vertex.

\begin{defn} We determine a {\em Farey sequence for ${\frac{p}{q}}$} inductively by choosing
the new vertex of next triangle in the sequence of
triangles traversed by $\gamma$.   The sequence ends at $p/q$.  \end{defn}

Given $p/q$, we can find the smallest rational $m/n$ and the largest rational
 $r/s$ that are neighbors of $p/q$.  These neighbors  have the property
that they are the only neighbors with lower Farey level. That is,
$m/n < p/q < r/s$ and  $Lev(p/q) > Lev(m/n)$, $Lev(p/q) >Lev(r/s)$,
and if $u/v$ is any other neighbor $Lev(p/q)< Lev(u/v)$.

\begin{defn}  We call the smallest and the largest neighbors of the rational $p/q$
 the {\em distinguished neighbors} or the {\sl parents} of $p/q$.
\end{defn}

Note that we can tell whether a distinguished neighbor $r/s$ is
smaller or larger than $p/q$ by the sign of   $rq-ps$.

We emphasize that we have two different and independent orderings of
the rational numbers: the ordering as rational numbers and the
partial ordering by level. Our proofs will often use induction on
the level of the rational numbers involved as well as the rational
order relations among parents and grandparents.

\begin{remark} It follows from remark \ref{remark:nbs} that
if $m/n$ and $r/s$ are the parents  of $p/q$, then any other
neighbor of $p/q$ is of the form ${\frac{m + pt}{n + tp}}$ or
${\frac{r + pt}{s + tp}}$ for some positive integer $t$. The
neighbors of $\infty$ are precisely the set of integers.
\end{remark}

Finally we note that we can describe the Farey sequence of $p/q$ by listing
the number of successive left or right edges of the triangles that $\gamma$ crosses where a left edge means
that there is one vertex of the triangle on the left of $\gamma$ and two on the right, and a right edge means there is only one vertex of $\gamma$ on the right.
This will be a sequence of integers, the {\sl left-right sequence}
$\pm(n_0;n_1,...,n_t)$ where the integers $n_i, i > 0$ are all positive and
$n_0$ is positive or zero.  The sign in front of the sequence is positive or negative depending on the orientation of $\gamma$.

\subsection{Preliminaries: Continued Fractions} \label{sec:cf}

Farey sequences are related to continued fraction expansions of
positive fractions; they can also be related to expansions of
negative fractions.  We review the connection in part to fix our
notation precisely.   We do not use the classical notation of
\cite{HardyWright} for negative fractions, but instead use the
notation of \cite{MKS,CS2,CS3} which is standardly used by
mathematicians working in Kleinian groups and three manifolds. This
notation reflects the symmetry about the imaginary axis in the Farey
tessellation which plays a role in our applications.   This symmetry
is built in to the semi group action on the tessellation.

For, for $p/q \geq 0$
write
$$ {\frac{p}{q}}
= a_0 +{1 \over {a_1 + {1 \over {a_2 + {1 \over a_3 + \dots +{1
\over a_k }}}}}}
 =[a_0; a_1, \ldots,a_k]$$
 where $a_j > 0$, $j=1\ldots k$, $a_0 \geq 0$.
  For $0\le n \le k$ set $$\frac{p_n}{q_n}=[a_0; a_1,  \ldots,  a_n].$$

  \begin{remark} The continued fraction of a
  rational is ambiguous; the continued fractions $[a_0;a_1,...,a_n]$ and $[a_0;a_1,...,a_n-1,1]$
  both represent the same rational.   Therefore, if we have $[a_0;a_1,...,a_{n-1},1]$  we may replace
  it with $[a_0;a_1,...,a_{n-1}+1]$.  \end{remark}

\begin{remark} Note that if $\frac{p}{q}  \geq 1$ has continued fraction expansion $[a_0;a_1,...,a_n]$,  then
${\frac{q}{p}}$ has expansion $[0;a_0,...,a_n]$ while   if $\frac{p}{q}  < 1$ has continued fraction expansion $[0;a_1,...,a_n]$,  then
${\frac{q}{p}}$ has expansion $[a_1;a_2,...,a_n]$.
\end{remark}

 The approximating fractions,
${\frac{p_n}{q_n}}$, are also  known as the {\sl approximants}. They
can be computed recursively from the continued fraction for $p/q$ as
follows:
$$p_0 = a_0,  q_0 = 1  \mbox{ and } p_1=a_0a_1+1, q_1 = a_1 $$
$$ p_j = a_j p_{j-1} + p_{j-2} \, \, , q_j = a_j q_{j-1} + q_{j-2} \, \, j=2, \dots, k.$$
One can calculate   from these recursion formulas that   the approximants are alternately to the right and left of $p/q$.

The Farey level can be expressed in terms of the continued fraction
$p/q=[a_0; a_1, \ldots a_k]$  by the formula $$Lev(p/q) =
\sum_{j=0}^k a_j.$$ The distinguished neighbors or parents of $p/q$
have continued fractions
$$[a_0; a_1 \ldots, a_{k-1}] \mbox{  and   } [a_0; a_1, \ldots,
a_{k-1},a_k-1].$$

 The Farey sequence contains
the approximating fractions as a subsequence. The points of the
Farey sequence between ${\frac{p_j}{q_j}}$  and
${\frac{p_{j+1}}{q_{j+1}}}$ have continued fraction expansions
$$[a_0; a_1, \ldots a_{j} + 1], [a_0;a_1, \ldots, a_{j} + 2], \ldots,  [a_0;a_1, \ldots a_{j} +
a_{j+1}-1].$$ They are all on the same side of $p/q$.

We  extend the continued fraction notion to negative fractions by defining the continued fraction of $\frac{p}{q}<0$
to be the negative of that for $|\frac{p}{q}|$.  That is,  by setting

$$\frac{p}{q}=-[a_0;a_1 \ldots, a_k] = [-a_0;-a_1 \ldots, -a_k]  \mbox{ where  }
|\frac{p}{q}| = [a_0;a_1,\dots a_k].$$
We also set $Lev(p/q)=Lev(|p/q|)$.

In \cite{HardyWright} the continued fraction
$[a_0;a_1, \ldots, a_k]$  of $p/q<0$ is defined so that $a_0< 0$ is
the largest integer in $p/q$ and $[a_1, \ldots a_k]= p/q-a_0$  is the continued
fraction of a positive rational. With this notation the symmetry about
the origin which plays a role in our applications is lost.

We note that for any pair of neighbors, unless one of them is $0/1$ or $1/0$, they both have
the same sign and thus have equal $a_0$ entries.  Since we almost always work with neighbors the
difference between our notation and the classical one does not play a role.

\section{Preliminaries: Lifting results from $PSL(2,\mathbb{R})$ to the free group $F$.}
In addition to the free group on two generators, $F=\langle a,b
\rangle $, we also consider a non-elementary representation of $F$
into $PSL(2,\mathbb{R})$ where $\rho: F \rightarrow
PSL(2,\mathbb{R})$ with $\rho(a) =A, \rho(b)=B$ and $\rho(F) = G =
\langle A,B \rangle$. In \cite{GM} it was  shown that if the
representation were discrete and free, then up to taking inverses as
necessary,  any pair of primitive words   could be obtained from
$(A,B)$ by applying a sequence of Nielsen transformations to the
generators.  This sequence is  described by an ordered set of
integers, $[a_0, \ldots a_k]$ and is used in computing the
computational complexity of the algorithm \cite{G1, YCJ}.    It was
termed the Fibonacci or
 $\mathcal{F}-$sequence  in \cite{GKwords}.  The words obtained
 by applying the algorithm are known as
 algorithmic words;
 here we call them
the $\mathcal{F}$-words.  We give precise definitions below (section
\ref{section:Fsequences}).

Our aim is
to lift these results for the generators of the representation groups to pairs of primitives in the free group. To do
this we review the algorithm  and other prior results.
In the free
group  $F$ there is no concept equivalent to a geometric orientation.  Therefore in lifting statements from $PSL(2,\mathbb{R})$ to $F$
we need to
carefully analyze the role of geometric orientation in the algorithm.

\subsection{Coherent Orientation, algorithmic words  and stopping generators}

The group $PSL(2,\mathbb{R})$ consists of  isometries in the
hyperbolic metric on the upper half plane $\mathbb{H}$.    It is
conjugate in $PSL(2,\mathbb{C})$ to the group of   isometries of the
unit disk $\mathbb{D}$ with its hyperbolic metric.  By abuse of
notation, we identify these groups and use whichever model is easier
at the time.  All of the results below are independent of the model
we use. An  isometry is called {\em hyperbolic} if it  has two fixed
points on the boundary of the half-plane or the disk,  and leaves
the hyperbolic geodesic joining them invariant.  This geodesic is
called the axis of the element. One of the fixed points is
attracting and the other is repelling.  This gives a natural
orientation to the axis since points are moved along the axis toward
the attracting fixed point.  This natural orientation does not exist
in the free group.

The result that we will apply from $PSL(2,\mathbb{R})$ uses the orientation of an
axis of an element to define the notion of a coherently oriented pair of elements or axes.
In what follows we need to lift this concept to the free group.

\begin{defn} Let $A$ and $B$ be {\sl any}  pair of hyperbolic generators of the group $G$
acting as isometries on the unit disk. Assume that they are given by
representatives in $SL(2, \mathbb{R})$ with  $tr A \ge tr B
> 2$.  Suppose the axes of $A$ and $B$ are disjoint.   Let $L$ be the common perpendicular geodesic to these axes oriented
from the axis of $A$ to the axis of $B$.  We may assume that the attracting fixed point of $A$ is to the left of $L$, replacing $A$ by $A^{-1}$ if necessary.  We say $A$ and $B$ are {\bf coherently oriented} if  the attracting fixed point of  $B$ is also to the left of $L$ and {\bf incoherently oriented} otherwise. \end{defn}

If $(A,B)$ are coherently oriented, then $(A,B^{-1})$ are incoherently
oriented.

If $G=\langle A,B \rangle $ is discrete and free,  and the axes of
$A$ and $B$ are disjoint, the quotient Riemann surface
$\mathbb{D}/G$ is a sphere with three holes; that is a pair of
pants. The axes of hyperbolic group elements project to  closed
geodesics on $S$.  The length of the geodesic is determined by the
trace of the element.

\subsubsection{Stopping generators}

 If $G$ is a discrete free subgroup of $PSL(2, \mathbb{R})$, the Gilman-Maskit
algorithm \cite{GM} goes through finitely many steps and at the
last, or $k+1^{th}$ step, it  determines that the group is discrete
and stops.  At each step, $t=0, \ldots,k$, it determines an integer
$a_t$ and a new pair of generators $(A_{t+1},B_{t+1})$; these
integers form an $\mathcal{F}-$sequence and  the pairs, $(A_t,B_t)$,
of {\em algorithmic words}   are the $\mathcal{F}-$ words in the
process above. The final pair of generators $(C,D)=(A_{k+1},B_{k+1})$ are
called the {\em stopping generators}.  It is shown in \cite{GKwords}
that if the  axes of the original generators are disjoint, the
stopping generators have the geometric property that their axes,
 together with the axis of $A_{k+1}^{-1}B_{k+1}$,  project to
the three shortest geodesics on the quotient Riemann surface and
these geodesics are disjoint and simple \cite{GKwords}.

\begin{lemma} If the
pair $(A,B)$ is coherently oriented, then either the pair $(C,D)$ is  coherently oriented or
one of the pairs  $(D,C^{-1})$ or  $(C,D^{-1})$ is.
 %In the latter case, we can thus obtain a coherently oriented pair of stopping generators using the $\F-$sequence
 %$[a_0,a_1, \ldots,a_{k},1]$
\end{lemma}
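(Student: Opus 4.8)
I would argue by induction on the number $k+1$ of steps of the Gilman--Maskit algorithm, reducing the lemma to a one-step statement together with a check that the bookkeeping composes. The one-step statement is: if a pair $(A_t,B_t)$ occurring in the algorithm is coherently oriented --- after the normalization $\mathrm{tr}(A_t)\ge\mathrm{tr}(B_t)>2$ and the placement of the attracting fixed point of $A_t$ to the left of the common perpendicular --- then the pair $(A_{t+1},B_{t+1})$ produced by the next Nielsen move is coherently oriented after possibly interchanging the two generators and/or replacing one of them by its inverse. One then notes that the renaming operations that can arise (interchange the pair; invert the first; invert the second) generate a group that, modulo the ``invert both generators'' symmetry which leaves coherent orientation unchanged, carries $(C,D)=(A_{k+1},B_{k+1})$ onto the list $\{(C,D),(D,C^{-1}),(C,D^{-1})\}$ up to an irrelevant further interchange, which yields the lemma.

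For the one-step statement I would first record that the Nielsen moves of the algorithm --- described explicitly in \cite{GM}, equivalently through the $\mathcal{F}$-sequence in \cite{GKwords}, as replacing the larger-trace generator by its product with a power of the smaller one and possibly interchanging the pair --- preserve disjointness of the two axes, so that ``coherently oriented'' stays meaningful at every stage; this is exactly the regime in which the algorithm is run and is part of what lets the hypothesis on $(A,B)$ be propagated. I would then follow the common perpendicular $L_t$ and the two attracting fixed points through the move. The clean way to do this is with the Farey-tessellation picture of \cite{GKwords}: a single step crosses one Farey triangle, so the endpoints of the three axes in play ($A_t$, $B_t$, and the product appearing in the move) are the vertices of one ideal triangle and move to an adjacent one, which determines on which side of the new common perpendicular each new attracting fixed point lands. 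A purely algebraic variant uses the identity $\mathrm{tr}(XY)+\mathrm{tr}(XY^{-1})=\mathrm{tr}(X)\,\mathrm{tr}(Y)$ together with the facts that, for a pair of disjoint-axes hyperbolics with traces exceeding $2$, coherent orientation is equivalent to an explicit inequality between $\mathrm{tr}(XY)$ and $\mathrm{tr}(XY^{-1})$, and that the algorithm strictly decreases the larger trace; this transports the ``coherence bit'' along.

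There is also a shorter but less self-contained route that bypasses the induction. By \cite{GKwords}, the axes of $C$, $D$ and $C^{-1}D$ project to the three shortest closed geodesics of the pair of pants $\mathbb{D}/G$, and these are simple and pairwise disjoint; in particular the axes of $C$ and $D$ are disjoint, so the notion ``coherently oriented'' applies to $\{C,D\}$. One then proves the elementary fact that a pair of hyperbolic generators of a pair-of-pants group with simple disjoint axes is, after interchanging the two and/or inverting at most one, coherently oriented --- with the correct orientations $C$, $D$ and $CD$ are the three boundary curves. Here the hypothesis that $(A,B)$ is coherently oriented is used to keep the algorithm in the pants branch and to fix which of $CD$, $CD^{-1}$ is the third boundary curve, hence to see that the correction needed is one of the two displayed rather than a simultaneous inversion of both generators.

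The main obstacle, in either approach, is the orientation bookkeeping: through each Nielsen move one must keep exact track of (i) which of the two current generators has the larger trace, since the convention $\mathrm{tr}(A)\ge\mathrm{tr}(B)$ may force an interchange, and (ii) which generator, if any, must be inverted so that its attracting fixed point lies on the prescribed side of the common perpendicular. Showing that these corrections never require inverting both generators at once --- equivalently, that the coherence bit of $(A,B)$ really does reach $(C,D)$ --- is the crux and is where the hypothesis is genuinely used; the trace identity, the stability of disjointness, and the Farey-triangle combinatorics are each routine or already available from \cite{GM,GKwords}.
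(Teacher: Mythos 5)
Your plan is in the same spirit as the paper's argument (track the orientation of the axes through the Nielsen moves of the algorithm), but it stops short at exactly the point you yourself flag as ``the crux,'' and that is where the actual content of the lemma lies. Your per-step invariant --- ``coherently oriented after possibly interchanging the generators and/or inverting one of them'' --- is too weak to compose: iterating it over $k+1$ steps only tells you that $(C,D)$ is coherent up to \emph{some} word in the renaming operations, and you give no argument that this word lands in the three-element list $\{(C,D),(D,C^{-1}),(C,D^{-1})\}$ rather than, say, requiring a swap together with an inversion that does not reduce (mod the invert-both symmetry) to one of the displayed pairs. The paper closes this gap with two observations your proposal lacks. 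First, through every step except the last the coherent orientation is preserved \emph{exactly}, with no renaming at all: the intermediate pairs $(B_{t-1}^{-1},B_t)$, $t\le k$, remain coherently oriented, essentially because the traces stay positive and the algorithm's trace-reduction keeps the ordering $\mathrm{tr}\,B_{t-1}>\mathrm{tr}\,B_t>2$ intact. Second, the only place coherence can break is the final step, precisely because the stopping condition forces $\mathrm{tr}\,B_{k+1}<0$; the trichotomy
$|\mathrm{tr}\,B_{k+1}|>\mathrm{tr}\,B_{k-1}>\mathrm{tr}\,B_k$, or $\mathrm{tr}\,B_{k-1}>|\mathrm{tr}\,B_{k+1}|>\mathrm{tr}\,B_k$, or $\mathrm{tr}\,B_{k-1}>\mathrm{tr}\,B_k>|\mathrm{tr}\,B_{k+1}|$
then produces, case by case, exactly the three pairs in the statement. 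Without localizing the failure to the last step and tying the correction to this trace trichotomy, the specific list in the conclusion is not derived.

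Your alternative ``shorter route'' via the three shortest geodesics on the pair of pants has the same defect in a different place: the assertion that a generating pair with simple disjoint axes is coherently oriented after interchanging and/or inverting at most one generator is precisely the lemma restated, and your sketch defers its proof (``one then proves the elementary fact that\dots'') rather than giving it; nor does it identify which of the two candidate corrections occurs. So the proposal correctly isolates the difficulty and assembles the right ingredients (disjointness of axes at every stage, trace identities, the Farey picture), but the decisive step --- why the correction is confined to the last stage and why it takes one of exactly the two displayed forms --- is missing.
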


\begin{proof}   We assume without loss of generality that the pair $(A,B)$ is coherently oriented because if it is not,  one of the pairs $(A,B^{-1})$ or
$(B,A^{-1})$ or $B^{-1},A^{-1})$ is coherently oriented and we can replace it with that one.
We can analyze the steps in the algorithm and the orientations of the intermediate generators  carefully
and see that,  if we start with a coherently oriented pair,  at each step, up to
  the next to last,  $t=k$,
the  pair  we arrive at, $(B_{t-1}^{-1}, B_{t})$, is coherently oriented.
We therefore need to check whether the last pair, $$(C,D) =
(A_{k+1},B_{k+1})=(B_{k}^{-1},B_{k+1})$$ is coherently oriented.

The stopping condition is that the last word  $B_{k+1}$ have negative
trace.  We know $tr B_{k-1} > tr B_k$; we don't know the relation of $|tr B_{k+1}|$ to these traces.
We will have either\\
$$ |tr B_{k+1}|> tr B_{k-1} > tr B_k  \mbox{  or  }$$
$$ tr B_{k-1}>| tr B_{k+1}| > tr B_k  \mbox{  or  }$$
$$ tr B_{k-1}> tr B_{k} > |tr B_{k+1}| .$$
In the first case   $(B_{k-1}^{-1},B_k)$ is
coherently oriented. In the second  case
$(B_{k}^{-1},B_{k+1})$ is incoherently oriented but
  $(D,C^{-1})=(B_{k+1}^{-1},B_k)$ is coherently oriented.     In the third case, again $(B_{k}^{-1},B_{k+1})$ is incoherently oriented but this time
 $(C,D^{-1})=(B_{k}^{-1},B_{k+1}^{-1})$ is coherently
oriented.
\end{proof}

\subsection{ ${\mathcal{F}}$-sequences} \label{section:Fsequences}

\begin{defn}An  ${\mathcal{F}}-$sequence is an ordered set of
integers $[a_0, \ldots a_k]$ where all the $a_i$, $i=0, \ldots,k$
have the same sign and all but $a_0$ are required to be non-zero.
\end{defn}

Given an  ${\mathcal{F}}$-sequence we define a sequence of words in
the group $G$.

\begin{defn} {\bf $\mathcal{F}$-words.}
 Let $A$ and $B$ generate  the group $G$ and let $\mathcal{F}=[a_0,...,a_k]$ be an   $\mathcal{F}-$sequence.
 We define the
ordered pairs of words $(A_t,B_t)$,  $t=0, \ldots, k$ inductively, replacing the
pair $(X,Y)$ given at step $t$ by the pair $(Y^{-1}, X^{-1}Y^{a_t})$ as
follows:    Set
$$(A_0,B_0) = (A,B) $$ and
$$(A_1,B_1)= (B^{-1}, A^{-1}B^{a_0}).$$ Then for $t=1, \ldots, k$, set
$$(A_{t+1 },B_{t+1 }) = (B_{t}^{-1}, A_{t}^{-1}B_{t}^{a_{t}}).$$
Note that $A_{t+1}=B_{t}^{-1}$ and $B_{t+1}=B_{t-1}B_{t}^{a_{t}}$ We
call the  words  $(A_t,B_t)$ the    $\mathcal{F}$-words determined
by the  $\mathcal{F}$-sequence.
\end{defn}
 We use the notation
$B_t=W_{[a_0, \ldots a_t]}(A,B)$.  With this notation
  the last pair  is  $$ A_{k+1}= (W_{[a_0, \ldots a_{k-1}]}(A,B))^{-1} \mbox{ and  }  B_{k+1}=W_{[a_0, \ldots a_k]}(A,B) .$$

 \subsection{Winding and Unwinding}\label{sec:example}

 In  \cite{GKwords} we studied the relationship between a given pair of generators for a free discrete two
 generator subgroup of $PSL(2,\mathbb{R})$ with disjoint axes and the stopping generators produced by
 the Gilman-Maskit algorithm. We found we could interpret the
 algorithm as an unwinding process, a process that at each step reduces the number
 of self-intersections of the corresponding curves on the quotient
 surface and unwinding the way in which stopping generators had been
 wound around each other to obtain the original primitive pair.

 Here is an example where we denote the original given pair of generators by
$(A,B)$ and the stopping generators
 by $(C,D)$.

\begin{Example}\label{example:example}

 We begin with the (unwinding) $\F-$sequence $[3, 2,4]$ and obtain
the words

$$(A_0,B_0)=(A,B)$$
$$(A_1,B_1)={(B^{-1}, A^{-1}B^3)}$$
$$(A_2,B_2)={ (B^{-3}A, BA^{-1}B^3A^{-1}B^3)}$$
 and
$$(A_3,B_3)= {(B^{-3}AB^{-3}AB^{-1}, A^{-1}B^3 \cdot (BA^{-1}B^3A^{-1}B^3)^4) = (C,D)}.$$

  Going backwards

  %with what we will term  the winding  sequence $[-4,-2,-3]$ we have
$$(C_0,D_0)=(C,D)$$
$$(C_1,D_1)= (C_0^{-4}D_0^{-1}, C_0^{-1}) = (B^{-3}A,  BA^{-1}B^3A^{-1}B^3)$$
$$ (C_2,D_2)= (C_1^{-2}D_1^{-1}, C_1^{-1}) =$$
$$((B^{-3}A)^{-2} \cdot (BA^{-1}B^3A^{-1}B^3)^{-1}, A^{-1}B^3) =
(B^{-1}, A^{-1}B^3)$$
$$(C_3,D_3) = (C_2^{-3}D_2^{-1}, C_2^{-1}) = (B^{3}B^{-3}A, B) = (A,B) $$

\noindent  We can think of this as the (winding) sequence given by
$[-4,-2,-3]$ and write
  $$ A=W_{[-4,-2,-3]}(C,D) \mbox{ and }  B=W_{[-4,-2]}^{-1}(C,D).$$
\end{Example}

\begin{defn} Let $q$ be a positive integer.  A  {\sl winding step} labeled by the integer $-q$ will
send the pair $(U,V)$ to the pair $(U^{-q}V^{-1}, U^{-1})$ and an {\sl
 unwinding step} labeled by the integer $q$ the  will send the
pair $(M,N)$ to the pair $(N^{-1},M^{-1}N^q)$.
\end{defn}

\begin{thm}\label{thm:winding} \cite{GKwords}   If $G = \langle A,B \rangle$ is a non-elementary,
discrete, free subgroup of $PSL(2,\mathbb{R})$ where $A$ and $B$ are
hyperbolic isometries with disjoint axes, then there exists an  {\sl
unwinding} $\mathcal{F}$-sequence $[a_0,..., a_k]$ such that the
{\sl stopping generators} $(C,D)$ are obtained from the pair $(A,B)$
by applying this $\F$-sequence. There is also an unwinding
$\mathcal{F}$-sequence $[b_0,...,b_k]$  such that the pair $(A,B)$
is the final pair in the set of $\F$-words obtained by applying the
winding $\mathcal{F}$-sequence to  the pair $(C,D)$.

The sequences are related by  $[b_0,...,b_k] = [-a_k,...,-a_0]$
\end{thm}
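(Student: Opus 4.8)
The plan is to treat the two assertions of the theorem separately. The first one---that there is an unwinding $\mathcal{F}$-sequence $[a_0,\dots,a_k]$ for which the stopping generators $(C,D)=(A_{k+1},B_{k+1})$ are the final pair of $\mathcal{F}$-words generated from $(A,B)$---is precisely what the Gilman--Maskit algorithm produces when run on the pair $(A,B)$: it halts after $k+1$ steps, records the integers $a_0,\dots,a_k$ as an $\mathcal{F}$-sequence, and outputs the stopping generators, which \cite{GKwords} identifies with the last pair of $\mathcal{F}$-words. I would simply quote this from \cite{GM,GKwords}, noting only that the output labels all share one sign ($a_j>0$ for $j\ge1$, $a_0\ge0$). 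This sign-coherence, together with the very fact that the algorithm terminates, is where the hyperbolic geometry enters (an area / Nielsen-reduction bound); granting it, the rest of the argument is purely formal.

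The engine for the reversal is a single algebraic identity: the winding step labelled $-q$ undoes the unwinding step labelled $q$, and conversely. Writing $\Phi_q\colon(M,N)\mapsto(N^{-1},M^{-1}N^{q})$ for the unwinding step and $\Psi_{-q}\colon(U,V)\mapsto(U^{-q}V^{-1},U^{-1})$ for the winding step, one computes
\[
\Psi_{-q}\bigl(\Phi_q(M,N)\bigr)=\Psi_{-q}\bigl(N^{-1},\,M^{-1}N^{q}\bigr)=\bigl(N^{q}N^{-q}M,\;N\bigr)=(M,N),
\]
and symmetrically $\Phi_q\circ\Psi_{-q}=\mathrm{id}$, so $\Psi_{-q}=\Phi_q^{-1}$ as bijections of the set of ordered pairs of elements of $F$ (for $q\ge1$; the boundary value $q=0$, needed because the very first $\mathcal{F}$-word step $(A,B)\mapsto(B^{-1},A^{-1}B^{a_0})$ is exactly $\Phi_{a_0}$, behaves the same way, $\Phi_0=\Psi_0$ being an involution). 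I would verify one of the two compositions and invoke symmetry for the other.

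It then remains to assemble. By the definition of the $\mathcal{F}$-words, the passage from $(A,B)$ to $(C,D)$ is the composite $\Phi_{a_k}\circ\cdots\circ\Phi_{a_1}\circ\Phi_{a_0}$, the initial step included as $\Phi_{a_0}$. Inverting a composite reverses the order and inverts each factor, so its inverse is $\Phi_{a_0}^{-1}\circ\cdots\circ\Phi_{a_k}^{-1}=\Psi_{-a_0}\circ\cdots\circ\Psi_{-a_k}$; concretely, applying the winding steps with labels $-a_k,-a_{k-1},\dots,-a_0$ in that order to $(C,D)=(A_{k+1},B_{k+1})$ runs the list of intermediate $\mathcal{F}$-word pairs backwards, yielding $(A_k,B_k),\dots,(A_1,B_1)$ and finally $(A_0,B_0)=(A,B)$, so $(A,B)$ is the final pair produced. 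Setting $b_i=-a_{k-i}$ for $0\le i\le k$ gives exactly $[b_0,\dots,b_k]=[-a_k,\dots,-a_0]$, the asserted relation; and since the $a_j$ are of one sign with $a_j\ne0$ for $j\ge1$, the sequence $[b_0,\dots,b_k]$ is again an $\mathcal{F}$-sequence (a winding one, with $b_i\ne0$ for $i<k$).

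The only genuine obstacle is therefore external to this bookkeeping: it is the termination of the Gilman--Maskit algorithm and the sign-coherence of its $\mathcal{F}$-sequence, which I take from \cite{GM,GKwords}. Within the present argument the one point that repays care is the asymmetric first step---label $a_0$, possibly zero---and its mirror, the last winding step $-a_0$: one must ensure that the admissible zero entry lands in the last slot of $[b_0,\dots,b_k]$ rather than the first, which is exactly what the reversal $[a_0,\dots,a_k]\mapsto[-a_k,\dots,-a_0]$ accomplishes.
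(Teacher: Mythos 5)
Your proposal is correct. Note that the paper itself offers no proof of this theorem --- it is quoted from \cite{GKwords} and only illustrated by Example~\ref{example:example} (the $[3,2,4]$ versus $[-4,-2,-3]$ computation). What you supply is exactly the mechanism that example demonstrates, made formal: the identity $\Psi_{-q}\circ\Phi_q=\mathrm{id}$ (which checks out: $\Psi_{-q}(N^{-1},M^{-1}N^{q})=\bigl((N^{-1})^{-q}(M^{-1}N^{q})^{-1},\,N\bigr)=(M,N)$), plus the standard reversal of a composite, yields $[b_0,\dots,b_k]=[-a_k,\dots,-a_0]$; the genuinely geometric content (termination of the Gilman--Maskit algorithm and sign-coherence of its output) is correctly isolated and outsourced to \cite{GM,GKwords}, which is also all the present paper does. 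One small point to tighten: when $a_0=0$ the reversed sequence has $b_k=0$, which sits in the \emph{last} slot, whereas the paper's formal definition of an $\F$-sequence permits a zero only in the first slot; your parenthetical ``$b_i\neq 0$ for $i<k$'' implicitly adopts the mirrored convention for winding sequences, which matches the intent of the theorem statement (it forces $b_k=-a_0$) but deserves an explicit sentence rather than the slightly confusing remark that the reversal puts the zero where it is ``admissible.''
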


This motivates the following definition.

\begin{defn} (1) We call the $\F$-sequence $[a_0,a_1, \ldots, a_k]$, determined by the discreteness algorithm
that finds the stopping generators when the group is discrete, the
{\bf unwinding} ${\mathcal{F}}$-sequence.

 (2)We call the $\F$-sequence $[b_0,b_1, \ldots,b_{k}]$,  that determines the original generators $(A,B)$ from the stopping generators $(C,D)$, the {\bf  winding} $\mathcal{F}-$sequence.
\end{defn}

\subsection{$\F$-sequences and rational numbers}

 We have been using a notation for our $\F$-sequences that looks very much like the continued
 fraction notation.  We justify
 this by identifying
the rational
 $p/q$ with continued fraction $[a_0;\ldots,a_k]$
with the $\F-$sequence $[a_0,\ldots,a_k]$. This justifies our
modifying the classical definition of continued fractions for
negative numbers in section~\ref{sec:cf}.    Moreover, the ambiguity in the definition of stopping generators corresponds exactly to the ambiguity in the definition of a continued fraction.

\section{Primitive exponents}

It follows from Theorem~\ref{thm:winding} that the stopping
generators are independent of the given set of generators.  This
means that every primitive word in the group $G$ is the last word in
a winding $\F$-sequence.   Using the rules for winding and unwinding
and  the identification of the $\F$-sequence with the rational $p/q$
it is easy to show that if we expand the $\F$-words into the form of
~(\ref{eq:form}) we have

\begin{thm} \label{thm:primspq} Let $(C,D)$ be stopping generators for $G$ and assume they are labeled so that they are coherently oriented. Every  primitive word   $W(C,D)$ in $G$ has one of
the following four forms where the $v_i >0$, $i=1, \ldots, j-1$,  and $v_0,v_j \geq0$.

$$W(C,D)=
  \left\{
\begin{array}{ll}
   C^{\epsilon v_0} D^{-\epsilon}C^{\epsilon v_1}D^{-\epsilon}C^{\epsilon v_2} \cdots D^{-\epsilon}C^{v_j}
         & \mbox {  for }  p/q >1 \mbox { or  } \\
   C^{-\epsilon v_0}  D^{\epsilon}C^{\epsilon  v_1}DC^{-v_2} \cdots D^{\epsilon }C^{-\epsilon  v_j} & \mbox{ for } 0< p/q  \leq1  \mbox { or } \\
    D^{\epsilon v_0}C^{\epsilon }D^{\epsilon v_1}C^{\epsilon }D^{\epsilon v_2} \cdots C^{\epsilon }D^{\epsilon v_j} &  \mbox { for } p/q < -1 \mbox { or  }  \\
    C^{\epsilon v_0}D^{\epsilon }C^{v_1}D^{\epsilon} C^{\epsilon v_2} \cdots D^{\epsilon} C^{\epsilon v_j} &  \mbox{ for  } -1 \leq  p/q <0    \\
\end{array} \right.$$
    where $p/q=[a_0;a_1, \ldots, a_k]$ is the rational corresponding to the winding $\F$-sequence and $\epsilon = \pm 1$ as $k$ is even or odd.
\end{thm}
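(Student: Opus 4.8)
The plan is to proceed by induction on the level $k$ of the rational $p/q = [a_0; a_1, \ldots, a_k]$, tracking simultaneously the precise shape of the $\mathcal{F}$-word $B_{k+1} = W_{[a_0,\ldots,a_k]}(C,D)$ and that of its companion $A_{k+1} = W_{[a_0,\ldots,a_{k-1}]}(C,D)^{-1}$, since the recursion $B_{t+1} = B_{t-1}B_t^{a_t}$, $A_{t+1} = B_t^{-1}$ couples the two. The base cases are small values of $p/q$: for instance $p/q = 1/0$ (empty or trivial $\mathcal{F}$-sequence) gives $(C,D)$ itself; $p/q = 0/1$ and the integers $p/1$ give powers and simple words $C D^{-1} C\cdots$ whose form one reads off directly. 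Because the recursion for $B_{t+1}$ alternates which of $C, D$ plays the distinguished role depending on the parity of the step, and because the four regions $p/q>1$, $0<p/q\le 1$, $p/q<-1$, $-1\le p/q<0$ are permuted among themselves by the substitutions $(C,D)\mapsto(C^{-1},D)$, $(C,D)\mapsto(D,C)$ etc.\ that the algorithm's inversions induce, the bulk of the argument is a bookkeeping lemma: passing from the pair at step $t$ to the pair at step $t+1$ sends a word of one of the four forms to a word of a (specified) one of the four forms, with the exponent sign $\epsilon$ flipping exactly when one more $a_t$ is appended, i.e.\ when $k$ changes parity. One should set this up so that the induction hypothesis records, for each step, not just that $B_{t+1}$ has ``one of the four forms'' but exactly \emph{which} form, governed by whether the rational $p_t/q_t$ lies in $(1,\infty)$, $(0,1]$, $(-\infty,-1)$, or $[-1,0)$, together with the sign $\epsilon = (-1)^{t}$ or similar.

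The key steps, in order: (1) reduce to the coherently oriented case, which is legitimate by the Lemma preceding the $\mathcal{F}$-sequence section, so that ``to the left of the common perpendicular'' fixes all the signs consistently; (2) verify the base cases by direct computation of $W_{[a_0]}(C,D) = C^{-1}D^{a_0}$ (or its analogue) and checking it matches the claimed form for the rational $[a_0] = a_0/1$; (3) prove the transition lemma: if $B_{t-1}$ and $B_t$ have the forms dictated by their rationals $p_{t-1}/q_{t-1}$ and $p_t/q_t$, then $B_{t+1} = B_{t-1}B_t^{a_t}$, after free reduction, has the form dictated by $p_{t+1}/q_{t+1} = [a_0;\ldots,a_t]$ — here one uses the approximant recursion $p_{t+1} = a_{t+1}p_t + p_{t-1}$ and the fact (quoted from the continued-fraction preliminaries) that successive approximants lie on opposite sides of $p/q$, which controls which region $p_{t+1}/q_{t+1}$ falls in and hence which of the four cases to land in; (4) check that no cancellation beyond the advertised amount occurs when forming $B_{t-1}B_t^{a_t}$ — i.e.\ that the first syllable of $B_t$ and the last syllable of $B_{t-1}$ do not collapse the word's structure — which follows from the alternating-side property and the fact that $B_t$ and $B_{t-1}$ begin and end with the ``right'' generators in the coherently oriented normal form; (5) track the exponent sign $\epsilon$: since each application of the recursion appends one new continued-fraction entry and simultaneously inverts, $\epsilon$ is determined by the parity of the number of entries, i.e.\ by whether $k$ is even or odd, as claimed.

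I expect step (3)–(4), the transition/no-cancellation lemma, to be the main obstacle. The forms in the four cases differ in subtle ways (in two of them a single isolated exponent $v_1$ or $v_2$ appears without the uniform sign $\epsilon$, reflecting an asymmetry between $C$ and $D$ inherited from $\operatorname{tr} C \ge \operatorname{tr} D$), so one must be careful that the substitution induced at each step genuinely permutes these four shapes rather than producing a fifth; and one must rule out unexpected free cancellation when raising $B_t$ to the power $a_t$ and prepending $B_{t-1}$. The cleanest way to handle this is probably to phrase everything in terms of the left–right (cutting) sequence of the Farey geodesic $\gamma$ for $p/q$ and to read the syllable structure of $B_{k+1}$ directly off that sequence, using the dictionary between $\mathcal{F}$-sequences and Farey sequences established in the preliminaries; then ``which of the four forms'' becomes ``which quadrant the endpoint lies in'' and the no-cancellation claim becomes the statement that consecutive triangles in the Farey sequence share exactly one edge. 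Once the syllable pattern is pinned down this way, matching it against the four displayed templates and reading off $\epsilon$ from the parity of $k$ is routine.
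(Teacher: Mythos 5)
Your proposal is correct and takes essentially the same route as the paper: an induction along the $\mathcal{F}$-word recursion $(A_{t+1},B_{t+1})=(B_t^{-1},A_t^{-1}B_t^{a_t})$, with the four displayed forms distinguished by the sign of the entries (positive versus negative $\mathcal{F}$-sequence) and by whether $a_0=0$ (which swaps the roles of $C$ and $D$), and with $\epsilon$ coming from the sign flip $D_t=C_{t-1}^{-1}$ at each step, so that its value is governed by the parity of $k$. The one obstacle you flag as the hard part --- ruling out unexpected cancellation in $B_{t-1}B_t^{a_t}$ --- is dispatched in the paper by the simpler algebraic observation that all exponents of $C$ (and likewise of $D$) retain a fixed sign throughout, so concatenation can only merge adjacent powers, never cancel; no appeal to the Farey cutting sequence is needed for this.
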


\begin{proof}  Starting with coherently oriented generators
$(C,D)$, and an $\F$-sequence with non-negative entries,
the exponents of $C$ and $D$ in the $\F$-words always have opposite signs.
If  $a_0>0$, and $p/q \geq 1$,  we see that $D_1=C^{-1}D^{a_0}$ and, as we go
through the $\F$-words, the exponent of $C$ will always have absolute value $1$ as in the first line.   If, on the other hand, $a_0=0$, and $0<p/q<1$, we see that $(C_1,D_1)=(D^{-1}, C^{-1})$ and the roles of $C$ and $D^{-1}$ and $D$ and $C^{-1}$ are interchanged as in the second line.

If we begin with  an $\F$-sequence with non-positive entries, the negative
entries cause the exponents of  $C$ and $D$ in the $\F$-words always to have the same sign.
Again, if $a_0<0$, we see that  $D_1=C^{-1}D^{a_0}$ and as we go through the $\F$-words,
the exponent of $C$ will always have absolute value $1$ as in the third line.  Similarly, if  $a_0=0$ we get the form of the last line.

In either case, as we step from $t-1$ to $t$, we have $D_t=C_{t-1}^{-1}$ so that the signs of all the exponents change.  This  accounts for the appearance of $\epsilon$ in the exponents.
\end{proof}

\begin{remark} \label{remark:c} {\rm (No Cancellation)}
We see from the above theorem that in any primitive word the
exponents of the $C$ generator
  are all of the same sign as are those of the $D$ generator.
Moreover,  by Theorem~\ref{thm:winding} and the identification of the $\F$-words with the algorithmic words,  we see that   if we have a primitive pair,  the $\F$-sequences agree in their first $k$ entries  so that  both words correspond to
fractions in the same interval of Theorem~\ref{thm:primspq}.   This implies that
there is no cancellation when we form products. {\sl Thus we do not
need to distinguish between concatenation and free reduction.}
\end{remark}

  We call the   exponents  $v_i$ the {\em primitive exponents} of the word $W(C,D)$.  They have the property that two adjacent primitive exponents differ by at most $1$.  There are formulas for writing the primitive exponents in terms of
the entries in the $(\mathcal{F})-$ sequence which can be found in  \cite{GKwords} and
\cite{GKHarvey} but we will not need them here.

The identification of continued fractions for rationals to $\F$-sequences,  together with Remark \ref{remark:c},  immediately imply

\begin{cor} \label{cor:taumap}There is a one-to-one map, $\tau$  from
pairs of  rationals $(p/q,r/s)$,  with $|ps-rq| =1$  to coherently oriented primitive pairs defined
by $\tau: (p/q,r/s) \mapsto (A,B) $ where $p/q$ is the rational with
continued fraction expansion $[a_0;.a_1, \ldots,a_k]$ and $r/s$ is the
rational with continued fraction expansion $[a_0;a_1,\ldots,a_{k-1}]$.
\end{cor}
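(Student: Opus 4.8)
The plan is to extract Corollary~\ref{cor:taumap} from Theorem~\ref{thm:winding}, Theorem~\ref{thm:primspq}, and the dictionary between winding $\F$-sequences and continued fractions. Throughout I would express primitive pairs in terms of a fixed coherently oriented pair of stopping generators $(C,D)$. Recall that a winding $\F$-sequence is, by definition, a finite integer tuple $[a_0,\dots,a_k]$ of constant sign with $a_i\neq 0$ for $i\ge 1$; such tuples are exactly the continued fraction expansions of rationals, and the one ambiguity — replacing $[a_0,\dots,a_{k-1},1]$ by $[a_0,\dots,a_{k-1}+1]$ — corresponds to the ambiguity in the definition of the stopping generators. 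Applying the winding $\F$-sequence $[a_0,\dots,a_k]$ to $(C,D)$ produces the final pair of $\F$-words $\bigl(W_{[a_0,\dots,a_{k-1}]}(C,D)^{-1},\,W_{[a_0,\dots,a_k]}(C,D)\bigr)$, which is therefore determined by the two one-entry truncations $[a_0;\dots,a_{k-1}]$ and $[a_0;\dots,a_k]$, i.e.\ by the two rationals $r/s$ and $p/q$ they represent. Since these truncations are consecutive approximants of the same continued fraction, one has $|ps-rq|=1$ automatically.

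First I would check that $\tau$ is well defined on the set of pairs with $|ps-rq|=1$, presented with $p/q$ the entry of larger level (the base case $(0/1,1/0)$ being checked directly). The condition $|ps-rq|=1$ forces $r/s$ to be a distinguished neighbor of $p/q$, and, as recorded in the Preliminaries, the two parents of $p/q$ are precisely the one-entry truncations of the two continued fraction expansions of $p/q$; so there is a continued fraction $[a_0;\dots,a_k]$ of $p/q$ with $[a_0;\dots,a_{k-1}]=r/s$, and we set $\tau(p/q,r/s)=\bigl(W_{[a_0,\dots,a_{k-1}]}(C,D)^{-1},\,W_{[a_0,\dots,a_k]}(C,D)\bigr)$, the final pair of $\F$-words for this winding $\F$-sequence.

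Surjectivity onto coherently oriented primitive pairs is then Theorem~\ref{thm:winding}: the stopping generators are intrinsic, so any primitive pair — after the inversion/relabeling that makes it coherently oriented, which by Theorem~\ref{thm:primspq} is exactly the normalization putting the pair in one of the four displayed forms — is the final pair of $\F$-words for some winding $\F$-sequence applied to $(C,D)$, hence equals $\tau(p/q,r/s)$ for the rationals attached to that sequence and its one-step truncation. For injectivity I would invoke Remark~\ref{remark:c}: there is no cancellation, so $W_{[a_0,\dots,a_k]}(C,D)$ is already in the reduced form~\eqref{eq:form} and its primitive exponents $v_i$ can be read straight off the word; since the tuple $(v_0,\dots,v_j)$ determines the $\F$-sequence $[a_0,\dots,a_k]$ (via the formulas of \cite{GKwords,GKHarvey}), the first word of $\tau(p/q,r/s)$ recovers $p/q$ and the second recovers $r/s$. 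Thus $\tau$ is a bijection onto the coherently oriented primitive pairs.

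The step I expect to be the real obstacle is the bookkeeping forced by the continued fraction ambiguity. One must be certain that it is the \emph{pair} $(p/q,r/s)$, not merely the rational $p/q$, that selects the continued fraction expansion to be used — the two parents of $p/q$ forcing its two different expansions — and, dually, that two distinct neighbor pairs never yield the same pair of $\F$-words. Both points reduce to the assertion that an $\F$-word determines its $\F$-sequence, and this is exactly where the no-cancellation property of Remark~\ref{remark:c} together with the explicit primitive-exponent formulas do the work.
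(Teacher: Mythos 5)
Your proposal is correct and follows essentially the same route as the paper: the paper states that Corollary~\ref{cor:taumap} follows ``immediately'' from the identification of $\F$-sequences with continued fractions together with Remark~\ref{remark:c}, giving no further argument, and your write-up is a careful unpacking of exactly that implication using the same ingredients (Theorem~\ref{thm:winding} for surjectivity, the no-cancellation property and primitive exponents for injectivity, and the truncation/parent dictionary for well-definedness). Your explicit attention to the continued-fraction ambiguity and to which member of the pair selects the expansion is a useful clarification of a point the paper leaves implicit, but it is not a different method.
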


 \begin{cor} \label{cor:ratlprim} Up to replacing a primitive word by its inverse, there is a one-to-one map from
the set of primitive elements to the set of all rationals.
\end{cor}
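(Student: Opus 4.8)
The plan is to read the corollary as asserting a bijection between the set of all rationals (including $1/0$) and the set of primitive elements of $F$, once each such element has been normalized to a canonical choice among itself and its inverse, and to build this bijection out of Theorem~\ref{thm:winding} and Theorem~\ref{thm:primspq} through the identification of $\F$-sequences with continued fractions. Fix once and for all a coherently oriented pair of stopping generators $(C,D)$; by Theorem~\ref{thm:winding} this pair does not depend on the chosen generators. Given a primitive element $W$, it belongs to some primitive pair, so the winding half of Theorem~\ref{thm:winding} presents that pair as the terminal pair of a winding $\F$-sequence applied to $(C,D)$; consequently $W$ equals, up to inverse, a terminal $\F$-word $W_{[a_0,\ldots,a_k]}(C,D)$. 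Choosing this representative, set $\sigma(W)$ to be the rational whose continued fraction expansion is $[a_0;a_1,\ldots,a_k]$, using the extension of the continued fraction notation to non-positive entries from section~\ref{sec:cf} to accommodate the negative rationals and $1/0$. That each inversion class contains exactly one such representative, up to the continued fraction ambiguity, is part of what must be verified below.

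\emph{Surjectivity of $\sigma$.} Given any rational, take its essentially unique continued fraction expansion $[a_0;a_1,\ldots,a_k]$, regard it as an $\F$-sequence, and form $W_{[a_0,\ldots,a_k]}(C,D)$. By the definition of the $\F$-words this word is obtained from $(C,D)$ by a finite chain of winding steps, each of which is a Nielsen transformation of the generating pair; hence it is primitive, and $\sigma$ returns the chosen rational.

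\emph{Injectivity of $\sigma$ (up to inverse).} Suppose two primitive words, normalized as above, are sent to the same rational $p/q$. Here I would invoke Theorem~\ref{thm:primspq}: the interval in which $p/q$ lies ($>1$, $(0,1]$, $<-1$, or $[-1,0)$) determines which of the four syllable patterns the word displays; the parity of the length of the continued fraction of $p/q$ fixes the exponent sign $\epsilon$ up to the ambiguity $[a_0;\ldots,a_k]=[a_0;\ldots,a_k-1,1]$, which only interchanges a word with its inverse and so is harmless after normalization; and the primitive exponents $v_0,\ldots,v_j$ are determined by the $\F$-sequence, hence by $p/q$, through the formulas recorded in \cite{GKwords,GKHarvey}. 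By Remark~\ref{remark:c} no free cancellation occurs when an $\F$-word is written in reduced form, so these data genuinely reconstruct the word; hence the two words agree up to inverse. Combined with surjectivity this shows $\sigma$ is a bijection onto the set of all rationals.

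The step I expect to be the real obstacle is exactly this injectivity/well-definedness argument: one must make sure that the correspondence between a primitive word and its pattern-plus-primitive-exponent data is faithful, so that two genuinely different reduced $\F$-sequences never yield the same primitive word and the only collisions are the continued fraction ambiguity together with inversion. Extracting this cleanly from Theorem~\ref{thm:primspq}, Remark~\ref{remark:c}, and the exponent formulas is the crux; by contrast the surjectivity half follows immediately from Theorem~\ref{thm:winding} and the description of $\F$-words as iterated Nielsen transformations.
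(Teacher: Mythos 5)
Your proposal is correct and follows essentially the same route as the paper: the paper's (very terse) proof also goes through the map $\tau$ of Corollary~\ref{cor:taumap}, i.e.\ the identification of continued fractions with $\F$-sequences and the four normal forms of Theorem~\ref{thm:primspq}, and its key assertion ``no word and its inverse both appear'' is exactly the injectivity point you single out as the crux. Your writeup simply makes explicit the surjectivity via Theorem~\ref{thm:winding} and the reconstruction of the $\F$-sequence from the syllable pattern and primitive exponents, which the paper leaves implicit.
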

\begin{proof}  In the map $\tau$ above, to each rational
we either obtain a word in either $(C,D)$ or    $(D^{-1},C^{-1})$
or $(C,D^{-1})$ or   $(D,C^{-1})$.   No word and its inverse both
appear.
 \end{proof}

 In the unwinding example, Example~\ref{example:example},   the $\F$-sequence is  $[3,2,4]$,
 the  rational is $31/9$ and the $\F$-word is
$ A^{-1}B^3 \cdot (BA^{-1}B^3A^{-1}B^3)^4$.

\subsection{Lifting to the free group}
We can now achieve  our  first goal which is to  extend these
  results from a two-generator non-elementary discrete free
subgroup $G$ of $PSL(2,\mathbb{R})$ with oriented generators  to the free group on two generators. To do this
we take
a
faithful representation of $F=(a,b)$ into such a group but map the pair of
generators $(a,b)$ to the coherently oriented stopping generators
$(C,D)$

We have

\begin{cor} \label{cor:all}
 Every pair of primitive associates in $F = \langle a,b \rangle$
the free group on two generators can be written in   the form
$(W(a,b), V(a,b))$ where
$$W(a,b) = W_{[a_0;\ldots ,a_k]}(a,b) \mbox{ and } V(a,b) =
W_{[a_0;\ldots ,a_{k-1}]}^{-1}(a,b)$$
 and is thus associated to a pair of rationals that are Farey neighbors.
\end{cor}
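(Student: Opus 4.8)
The plan is to obtain the corollary as a formal consequence of the structure theorems already proved for the Fuchsian group $G$, by realizing the abstract free group $F$ inside $PSL(2,\mathbb{R})$ in the precise way indicated just above the statement and then transporting word identities across a faithful representation.

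First I would fix, once and for all, a non-elementary, discrete, free subgroup $G=\langle C,D\rangle$ of $PSL(2,\mathbb{R})$ for which $(C,D)$ is a coherently oriented pair of \emph{stopping} generators; concretely one may take $G$ to uniformize a hyperbolic pair of pants, so that $G$ has no parabolic or elliptic elements, its stopping generators are hyperbolic with disjoint axes, and those axes together with that of $C^{-1}D$ project to the three disjoint simple boundary geodesics (the geometric facts recalled from \cite{GKwords}), after which one may replace a generator by its inverse, if necessary, to make the pair coherently oriented. Since $F$ and $G$ are both free of rank two, the assignment $a\mapsto C$, $b\mapsto D$ extends to an isomorphism $\rho\colon F\to G$ whose inverse sends $(C,D)$ back to $(a,b)$.

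Next, given a pair of primitive associates $\{W(a,b),V(a,b)\}$ of $F$, I would push it through $\rho$: its image $\{W(C,D),V(C,D)\}$ generates $G$, hence is a basis and so a primitive pair of $G$. Now Theorem~\ref{thm:primspq}, together with Corollary~\ref{cor:taumap} and the argument of Corollary~\ref{cor:ratlprim}, applies: after fixing the order of the two elements there is a rational $p/q$ with continued fraction $[a_0;a_1,\ldots,a_k]$ such that, writing $r/s=[a_0;a_1,\ldots,a_{k-1}]$, one has $W(C,D)=W_{[a_0;\ldots,a_k]}(C,D)$ and $V(C,D)=W_{[a_0;\ldots,a_{k-1}]}^{-1}(C,D)$; the four cases of Theorem~\ref{thm:primspq} account for the position of $p/q$ relative to $0,\pm1,\infty$ (with the ambiguity in the continued fraction expansion matching the ambiguity in the choice of stopping generators), and Remark~\ref{remark:c} guarantees that the two $\F$-sequences agree in their first $k$ entries, which forces $r/s$ to be a parent of $p/q$. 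Since these equalities are identities among words in the free generators $C,D$ of $G$, applying $\rho^{-1}$ yields the \emph{same} identities in $F$ with $a,b$ in place of $C,D$, i.e. $W(a,b)=W_{[a_0;\ldots,a_k]}(a,b)$ and $V(a,b)=W_{[a_0;\ldots,a_{k-1}]}^{-1}(a,b)$. Finally, $p/q$ and its parent $r/s$ are distinguished neighbors in the Farey tessellation, so $|ps-rq|=1$ by the properties reviewed in the Preliminaries (for $p/q<0$ this is inherited from $|p/q|$ by negating); thus the pair is associated to a pair of Farey neighbors.

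The substantive point is not the lifting, which is routine once the representation is in hand, but the bookkeeping around orientation and degenerate cases. Specifically, one must be sure that a representation carrying $(a,b)$ to coherently oriented stopping generators actually exists --- this is exactly where the geometry of \cite{GKwords} is used --- and that the ``coherently oriented'' hypothesis built into Corollary~\ref{cor:taumap} costs nothing in the orientation-free group $F$: the Nielsen relabelings $(X,Y)\mapsto(Y,X)$, $(X,Y)\mapsto(X^{-1},Y)$, and so on, which pass from an arbitrary primitive pair of $G$ to a coherently oriented one, are precisely the moves already absorbed into the four forms of Theorem~\ref{thm:primspq} and the four alternatives in Corollary~\ref{cor:ratlprim}. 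One also has to handle the base pairs $(a,b)$, $(a^{-1},b)$, $(a,b^{-1})$ directly, as in the definition of the enumeration scheme, where they correspond to the rationals $1/0$ and $0/1$; these are the cases in which the continued-fraction description degenerates.
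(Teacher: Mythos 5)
Your proposal is correct and follows essentially the same route as the paper: the paper's proof is the single sentence ``take a faithful representation of $F$ into $PSL(2,\mathbb{R})$ mapping $(a,b)$ to the stopping generators $(C,D)$,'' and then implicitly invokes Theorem~\ref{thm:winding}, Theorem~\ref{thm:primspq} and Corollaries~\ref{cor:taumap}--\ref{cor:ratlprim} exactly as you do. You have simply made explicit the bookkeeping about existence of such a representation, coherent orientation, and the degenerate base cases that the paper leaves to the reader.
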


\begin{proof} Take a faithful representation of $F$ into
$PSL(2,\mathbb{R})$ this time mapping the ordered pair $(a,b)$ to
the ordered pair $(C,D)$ where $C$ and $D$ are the stopping
generators for the group they generate.
\end{proof}

\subsection{Concatenation vs. Free Reduction}

Because the words that we obtain from the algorithm are
freely reduced and in a form where there is never any
reduction with
the words we work with, we do not distinguish between
freely reduced products and the concatenation of two words.

\section{Enumerating primitives: palindromes and products}
\label{section:enumerating}

We first work with positive rationals. We do this merely for ease of
exposition and to simplify the notation.
  We  then indicate the minor changes needed for negative rationals.

 \bigskip

 \centerline{\bf Enumeration Scheme for positive rationals}
 Set
$$  E_{0/1}=A^{-1},  E_{1/0}= B, \mbox{ and } E_{1/1}=A^{-1}B.$$

If $p/q$ has continued fraction expansion $[a_0; a_1, ..., a_{k-1},
a_k]$,  consider the parent fractions
  $ [a_0;_1,...,a_{k-1}]$ and $
[a_0,...,a_{k-1}, a_k-1]$.   Choose labels $m/n$ and $r/s$ so that $m/n < p/q < r/s$.  Set

\vskip .2in

 \noindent{\bf Case 1} $pq$ - odd: $$E_{p/q}=E_{r/s}E_{m/n}$$

\noindent{\bf Case 2} $pq$ - even: $$E_{p/q} = E_{m/n}E_{r/s}.$$

\vskip .2in
 Note that in Case 1 the word indexed by the larger
fraction is on the left and in Case 2 it is on the right.

\bigskip
\newpage
 \centerline{\bf Enumeration Scheme for negative rationals}
 \bigskip

 Now assume $p/q<0$.   We use the reflection in the imaginary axis to obtain the enumeration scheme.   The reflection sends $A^{-1}$ to $A$.  This  reverses the order of the distinguished neighbors.

  Set
$$  E_{0/1}=A \mbox{    and    } E_{1/0}=  B $$  These are
trivially palindromes.  At the next level we have $E_{-1/1}=BA. $

To give the induction scheme:  we assume  $m/n,r/s$ are the distinguished
neighbors of $p/q$
and  they satisfy  $m/n >
p/q > r/s$, $p/q=(m+r)/(n+s)$ and set\\

 \noindent{\bf Case 1} $pq$ - odd:
 $$E_{p/q}=E_{r/s}E_{m/n}.$$

\noindent{\bf Case 2} $pq$ - even:
$$E_{p/q} = E_{m/n}E_{r/s}.$$

Note that in Case 1 the word indexed by the larger fraction is on
the right and in Case 2 it is on the left.

\begin{thm}\label{thma4} {\bf \rm {\bf (Enumeration  by Rationals)}}
The primitive elements of a two generator free group can be
enumerated by the rationals  using Farey sequences.
  The resulting words are denoted by $E_{p/q}$. In
the enumeration scheme, when $pq$ is even, $E_{p/q}$ is a
palindrome, and when $pq$ is odd, $E_{p/q}$ is a product of
palindromes that have already appeared in the scheme.

\begin{itemize}
\item  For $pq$ even,  $E_{p/q}$ is a palindrome.
It is cyclically reduced and the unique palindrome in its conjugacy class.

\item For $pq$ odd,
 $E_{p/q} = E_{m/n}E_{r/s}$ where $m/n$ and $r/s$ are the
parents of  $p/q$ with $m/n$ the smaller one. Both $E_{m/n}$ and
$E_{r/s}$ are palindromes;  $E_{p/q}$ is cyclically reduced.
\end{itemize}

\end{thm}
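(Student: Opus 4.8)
The plan is to prove Theorems~\ref{thmmain} and~\ref{thma4} together by a single induction on the Farey level $\mathrm{Lev}(p/q)$, the base cases being immediate: single letters are palindromes, and for $1/1$ the product of the two single-letter palindromes $A^{-1}$ and $B$ realizes the $pq$-odd case. At each step there are three things to establish: (i) that $E_{p/q}$ really is the primitive word attached to $p/q$ under the correspondence of Corollaries~\ref{cor:all} and~\ref{cor:ratlprim}; (ii) the palindrome/product dichotomy governed by the parity of $pq$; and (iii) cyclic reducedness together with uniqueness of the palindrome in the conjugacy class. The elementary engine is a \emph{parity lemma}: for the parents $m/n, r/s$ of $p/q$, which satisfy $p=m+r$, $q=n+s$, $|ms-nr|=1$, exactly one of the three fractions $m/n$, $p/q$, $r/s$ has both numerator and denominator odd (a reduced fraction is never (even,even), and $|ms-nr|=1$ forbids two (odd,odd) vertices of a Farey triangle). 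Hence if $pq$ is odd both parents have even product, so by induction $E_{m/n}$ and $E_{r/s}$ are palindromes and the relevant Case writes $E_{p/q}$ as a product of two palindromes already in the scheme; while if $pq$ is even exactly one parent has odd product, and — since two parents have equal Farey level only for $p/q=1/1$, which is $pq$-odd — the lower-level parent is itself a parent of the higher-level one, so the higher parent's defining product already contains the lower one as a factor.

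For (i) I would match the recursion defining $E_{p/q}$ with the $\F$-word recursion $B_{t+1}=B_{t-1}B_t^{a_t}$ of Section~\ref{section:Fsequences}. Using the identification of continued fractions with $\F$-sequences, each jump from $p_{n-1}/q_{n-1}$ to $p_n/q_n$ in the $\F$-word recursion factors into $a_n$ elementary Farey-sum steps through the interpolating fractions $[a_0;\dots,a_{n-1},j]$, and each elementary step is exactly one application of Case~1 or Case~2; carrying this out and invoking Remark~\ref{remark:c} (no cancellation) identifies $E_{p/q}$ with the primitive word attached to $p/q$ by Corollary~\ref{cor:all}, and then Corollary~\ref{cor:ratlprim} gives the enumeration — every primitive, up to inverse, appearing exactly once. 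The negative rationals need no new argument: reflection in the imaginary axis preserves the Farey tessellation, interchanges $A^{-1}$ and $A$, reverses the order of the distinguished neighbours, and so carries the positive scheme onto the negative one.

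For (ii) the substantive point is the palindrome claim for $pq$ even, which I would prove in the sharper inductive form that $E_{p/q}=P\,Q\,P$ for palindromes $P,Q$ occurring earlier in the scheme. Feeding the parity lemma into the defining product and substituting the factorization of the odd parent — one parent equals $E_{\text{grandparent}}\cdot E_{\text{other parent}}$ or the reverse — the left/right assignment in Case~1 versus Case~2 is exactly what is needed to push the repeated palindromic factor to the two ends; since $P$ and $Q$ are palindromes, so is $PQP$. (Equivalently, via Theorem~\ref{thm:primspq} one may phrase this on the level of primitive exponents: the exponent sequence of $E_{p/q}$ is a palindromic sequence when $pq$ is even and a concatenation of two palindromic sequences when $pq$ is odd, and the recursion preserves this symmetry.) Cyclic reducedness for (iii) then follows at once from Theorem~\ref{thm:primspq} and Remark~\ref{remark:c}: in each of the four normal forms the first and last letters are either equal powers of one generator or powers of the two different generators, so no cyclic cancellation can occur. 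Uniqueness: a reduced palindrome has first letter equal to last letter, so a second palindrome in the conjugacy class would have to be a \emph{distinct} cyclic rotation of $E_{p/q}$; two distinct palindromic centers on a cyclic word compose to a nontrivial rotational symmetry, which would force the cyclic word to be a proper power, impossible for a primitive.

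The main obstacle is not conceptual but combinatorial: in steps (i) and (ii) one must keep careful track, across all four coordinate regions of Theorem~\ref{thm:primspq} and in both the positive and negative schemes, of which parent carries the higher Farey level and which carries the odd product, and verify in each case that the Case~1/Case~2 left/right rule deposits the repeated palindromic factor symmetrically and respects the sign $\epsilon=\pm1$. Everything else reduces to the parity lemma, the $\F$-word recursion of Theorem~\ref{thm:winding}, and the single structural fact that palindromes are closed under the operation $P,Q\mapsto PQP$.
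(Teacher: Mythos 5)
Your proposal follows essentially the same route as the paper: induction on the Farey level, the parity observation that each Farey triangle has exactly one ``odd'' vertex, and the closure of palindromes under $P,Q\mapsto (PQ)^tP$ to handle the $pq$-even case via the grandparents, with the identification of the scheme with the $\F$-word recursion giving the enumeration. You additionally sketch arguments for cyclic reducedness and for uniqueness of the palindrome in its conjugacy class, points the paper asserts but does not spell out.
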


\begin{remark}Note that although there are several ways words in the conjugacy class
$E_{p/q}$ can be factored as products of palindromes, in this
theorem    we specifically choose the one that makes the enumeration
scheme work. \end{remark}

Not only are the words in this enumeration scheme primitive, we have

\begin{thm} \label{thma5} Let $\{E_{p/q}\}$  denote the words in the
enumeration scheme for positive rationals.  Then if $(p/q,p'/q')$
are neighbors, the pair of words $(E_{p/q}, E_{p'/q'})$ is a  pair of
primitive associates.
\end{thm}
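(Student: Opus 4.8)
The plan is to prove this by induction on the Farey level $Lev(p/q) + Lev(p'/q')$, using the structural description of $E_{p/q}$ given by the enumeration scheme together with Corollary~\ref{cor:all}, which says that every pair of primitive associates has the form $(W_{[a_0;\ldots,a_k]}(a,b), W_{[a_0;\ldots,a_{k-1}]}^{-1}(a,b))$ for some $\F$-sequence. First I would dispose of the base cases: the pairs built from $E_{0/1}=A^{-1}$, $E_{1/0}=B$, and $E_{1/1}=A^{-1}B$ (and their negative-rational counterparts), and check directly that $(A^{-1},B)$, $(B,A^{-1}B)$, $(A^{-1},A^{-1}B)$ are primitive pairs — each is obtained from $(a,b)$ by an obvious Nielsen move. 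Then, for the inductive step, given Farey neighbors $p/q$ and $p'/q'$ with $|pq'-qp'|=1$, one of them — say $p/q$ — has strictly larger level, and its two parents are $p'/q'$ and a second neighbor; by Remark~\ref{remark:nbs} the parents $m/n$ and $r/s$ of $p/q$ are themselves a Farey neighbor pair with $m/n \oplus r/s = p/q$, and $p'/q'$ is one of them, the other being the "grandparent" reached through $p'/q'$.

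The core of the argument is then: by the induction hypothesis applied to the lower-level pair $(m/n, r/s)$, the pair $(E_{m/n}, E_{r/s})$ is a pair of primitive associates, hence generates $F$. Now $E_{p/q}$ is, by the scheme, either $E_{r/s}E_{m/n}$ or $E_{m/n}E_{r/s}$ — in either case a product of the two generators of this primitive pair, so $\{E_{p/q}, E_{m/n}\}$ and $\{E_{p/q}, E_{r/s}\}$ are each obtained from $\{E_{m/n}, E_{r/s}\}$ by a single elementary Nielsen transformation (left or right multiplication of one generator by the other), hence are themselves generating pairs, i.e. primitive pairs. Since the neighbor $p'/q'$ we started with is one of $m/n$, $r/s$, this shows $(E_{p/q}, E_{p'/q'})$ is a primitive pair and closes the induction. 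I would also check the orientation/ordering bookkeeping — that the Case 1 / Case 2 split (larger-indexed word on the left when $pq$ is odd, on the right when $pq$ is even) is consistent with the parity of the continued-fraction length $k$ governing the $\epsilon$ in Theorem~\ref{thm:primspq}, so that the word $E_{p/q}$ really is the $\F$-word $W_{[a_0;\ldots,a_k]}$ attached to $p/q$ and its partner is $W_{[a_0;\ldots,a_{k-1}]}^{-1}$, matching the pair structure of Corollary~\ref{cor:all}.

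The main obstacle I anticipate is not the Nielsen-move bookkeeping but verifying that the specific left/right ordering dictated by the parity of $pq$ in the scheme always coincides with the ordering forced by the $\F$-word recursion $(A_{t+1},B_{t+1}) = (B_t^{-1}, A_t^{-1}B_t^{a_t})$. Concretely: when we pass from the parents to $p/q$, the scheme prescribes a product in one fixed order, whereas the $\F$-sequence machinery prescribes $B_{t+1} = B_{t-1}B_t^{a_t}$, and one must confirm these agree once the indices $m/n, r/s$ are correctly matched to $B_{t-1}, B_t$ (equivalently, to the continued fractions $[a_0;\ldots,a_{k-1}]$ and $[a_0;\ldots,a_{k-1},a_k-1]$). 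This is exactly the point where the parity condition enters, and it will require carefully tracking which parent has the longer continued fraction expansion and how $\epsilon = (-1)^k$ flips the roles; once that compatibility is established, the induction runs cleanly. A secondary, more routine point is handling the negative-rational scheme, where the reflection in the imaginary axis sending $A^{-1}\mapsto A$ reverses the left/right roles — but this is a symmetry argument, not a new obstacle, and the same induction applies verbatim after conjugating by the reflection.
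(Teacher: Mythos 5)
Your proposal is correct and is essentially the paper's own argument: an induction on Farey level in which the lower-level member of the neighbor pair is identified as a parent of the higher-level one, so that $(E_{p/q},E_{p'/q'})$ is obtained from the parental generating pair $(E_{m/n},E_{r/s})$ by a single elementary Nielsen move (the paper phrases the induction on the maximum level and adds an explicit extra step for non-parent neighbors, which your choice of always descending from the higher-level element absorbs automatically). One caution: the ``main obstacle'' you anticipate is spurious --- the theorem does not require $E_{p/q}$ to coincide with the $\F$-word $W_{[a_0;\ldots,a_k]}$ (the paper explicitly notes these words are in general only conjugate, not equal, and that the two kinds of primitive pairs need not be simultaneously conjugate), so no such left/right compatibility check is needed and attempting to establish that identity would fail.
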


Before we give the proofs we note that given $p/q=[a_0;a_1, \ldots
a_k] \geq 0$ the $\F-$sequence word $W_{[a_0;a_1, \ldots a_k]}(A,B)$
determines a specific word in its conjugacy
class in $G$.  The enumeration scheme also determines a word, $E_{p/q}$,  in the same conjugacy class.  In general these words, although conjugate, are different.

Theorem~\ref{thma5}
tells us that words  in the enumeration scheme labeled with neighboring Farey fractions
  give rise to primitive pairs. Note that although cyclic
permutations are obtained by conjugation, we cannot necessarily
simultaneously conjugate both elements of a primitive pair coming
from the $\F$-sequence to get to the corresponding primitive pair
coming from Theorem~\ref{thma5}.

\begin{proof} ({\bf proof of Theorem \ref{thma4}})
The proof uses the connection between  continued fractions and the
Farey tessellation.  We observe that in every Farey triangle with
vertices $m/n,p/q, r/s$ one of the vertices is {\sl odd} and the
other two are {\sl even}. To see this simply use the fact that
$mq-np, ps-rq, ms-nr$ are all congruent to $1$ modulo $2$. (This
also gives the equivalence of parity cases for $pq$ and the $p+q$
used by other authors.) In a triangle where $pq$ is even, it may be that
the smaller distinguished neighbor is even and the larger odd or
vice-versa and we take this into account in discussing the
enumeration scheme. We note that in general if $X$ and $Y$  are
palindromes, then so is $(XY)^tX$ for any positive integer $t$.

We give the proof assuming $p/q>0$.  The
proof proceeds by induction on the Farey level.
The idea behind the proof is that each rational has a pair of
parents (distinguished neighbors) and each parent in turn has two parents so there are at most
four {\sl  grandparents} to consider. The parents and grandparents
may not all be distinct. The cases considered below correspond to
the possible ordering of the grandparents as rational numbers and
also the possible orders of their levels.

 To deal with negative rationals we use the reflection in the imaginary axis.     The reflection sends $A^{-1}$ to $A$.  We again have distinguished neighbors $m/n$ and $r/s$, and using the reflection our assumption is
  $m/n>p/q>r/s$.  In the statement of the theorem,  $m/n$ is now the larger neighbor.   Using our definition of the  Farey level of $p/q$ as the Farey level of $|p/q|$,  the proof is exactly the same as for positive rationals.

 \vskip .1in

 In case 1) by induction we  get the product of distinguished neighbor
palindromes.

\vskip .1in

In case  2) we  need to show that we get palindromes. \vskip
.1in

   The set up shows that we have palindromes for level $0$, ($\{0/1,1/0\}$) and the correct product for
   level $1$, $\{1/1\}$.

Assume the scheme works for all rationals with  level less than $N$
and assume $Lev(p/q)=N$. Since $m/n,r/s$   are distinguished
neighbors of $p/q$   both their levels are less than $N$.

 Suppose $p/q=[a_0,a_1, \ldots, a_k]$.  Then $Lev(p/q)=\sum_0^k a_j=N$ and the continued fractions of the parents of $p/q$ are $$[a_0,a_1, \ldots, a_{k-1}]  \mbox{  and   } [a_0,a_1,\ldots, a_{k-1}, a_k-1].$$
Assume we are in case 2 where $pq$ is even.

Suppose first that so that  we have
   \begin{equation} \label{eqn:levrsbigger} m/n=[a_0,a_1, \ldots, a_{k-1}]  \mbox{  and   } r/s=[a_0,a_1,
\ldots, a_{k-1}, a_k-1].\end{equation}
Then the smaller
distinguished neighbor of $r/s$ is  $ m/n$ and the larger  distinguished
neighbor   is
 \begin{equation}\label{eqn:firstgp} w/z=[a_0,a_1,
\ldots, a_{k-1}-2].\end{equation}
The smaller distinguished neighbor
of $m/n$ is
\begin{equation}\label{eqn:secondgp}u/v=[a_0, \ldots,
a_{k-2},a_{k-1}-1]\end{equation}
and the larger distinguished
neighbor is
 \begin{equation}\label{eqn:sixthgp} x/y = [ a_0, \ldots,
a_{k-2}].\end{equation}

If $rs$ is odd we have, by the induction hypothesis
$$E_{r/s}=E_{r/s} E_{w/z}E_{m/n}$$ and
$$E_{p/q}= E_{m/n}E_{r/s}=E_{m/n}( E_{w/z}E_{m/n})$$ which
is a palindrome.

If $mn$ is odd we have, by the induction hypothesis,
%and (\ref{equation:form1}) and
$$E_{m/n} =E_{x/y}E_{u/v}$$ and by
equations~(\ref{eqn:levrsbigger}),~(\ref{eqn:firstgp}),~(\ref{eqn:secondgp}))and
(\ref{eqn:sixthgp})
$$E_{r/s}=E_{m/n}^{(a_{k}-1)} E_{x/y}=(E_{x/y}E_{u/v})^{(a_{k}-1)}E_{x/y}$$ so that
$$E_{p/q}=E_{m/n}E_{r/s}= (E_{x/y}E_{u/v})^{a_{k} }E_{x/y}$$ is a
palindrome.

\vskip .2in
    If
  $$Lev(r/s)<Lev(m/n),$$ we have
  \begin{equation}\label{eqn:levmnbigger} m/n=[a_0,a_1, \ldots, a_{k-1},a_k-1]  \mbox{  and   } r/s=[a_0,a_1,
\ldots, a_{k-1}].\end{equation} Then the larger
distinguished neighbor of $m/n$  is $ r/s$  and the smaller distinguished
neighbor is
 \begin{equation}\label{eqn:thirdgp} w/z=[a_0,a_1,
\ldots, a_{k-1}-2].\end{equation}
The larger distinguished neighbor
of $r/s$ is
\begin{equation}\label{eqn:fourthgp} x/y=[a_0, \ldots,
a_{k-2},a_{k-1}-1]\end{equation}
and the smaller distinguished
neighbor is \begin{equation} \label{eqn:fifthgp}u/v = [ a_0, \ldots,
a_{k-2}].\end{equation}

If $mn$ is odd we have, by the induction hypothesis
$$E_{m/n}=E_{r/s}E_{w/z}$$ and
$$E_{p/q}E_{m/n}E_{r/s}=E_{r/s}( E_{w/z}E_{r/s})$$ which
is a palindrome.

If $rs$ is odd we have, by the induction hypothesis
$$E_{r/s}=E_{x/y}E_{u/v}$$ and by equations~(\ref{eqn:levmnbigger}),~(\ref{eqn:thirdgp}),~(\ref{eqn:fourthgp}) and
(\ref{eqn:fifthgp})
$$E_{m/n}=E_{u/v} E_{r/s}^{(a_{k}-1)}  =E_{u/v}(E_{x/y}E_{u/v})^{(a_{k}-1)} $$ so that
$$E_{p/q}=E_{m/n}E_{r/s}= E_{u/v}(E_{x/y}E_{u/v})^{a_{k} } $$ is a
palindrome.

We have yet to establish that the $E_{p/q}$ words are primitive but
this follows from the proof of Theorem \ref{thma5}.
\end{proof}

\subsection{Proof of Theorem~\ref{thma5}}

\begin{proof}  The proof is by induction on the maximum  of the levels of $p/q$ and
$p'/q'$.  Again we proceed assuming $p/q >0$;  reflecting in the
imaginary axis we obtain the proof for negative rationals.

 At level $1$, the theorem
is clearly true: $(A^{-1},B)$, $(A^{-1},BA^{-1})$ and $(BA^{-1},B)$ are all primitive
pairs.

Assume now that the theorem holds for any pair both of whose levels
are less than $N$.

\begin{enumerate}

\item
Let $(p/q,p'/q')$ be a pair of neighbors with $Lev(p/q)=N$.
\vskip .1in
 \item

Let $m/n, r/s$ be the distinguished neighbors of $p/q$ and assume
$m/n<p/q<r/s$. \vskip .1in
\begin{enumerate}

\item Then $m/n,r/s$ are neighbors and both are have level
less than $N$ so that by the induction hypothesis
$(E_{m/n},E_{r/s})$ is a pair of primitive  associates.
\vskip .1in

\item It follows that all of  the pairs
$$(E_{m/n},E_{m/n}E_{r/s}),  \,\, (E_{m/n},E_{r/s}E_{m/n}),$$ $$(E_{m/n}E_{r/s}, E_{r/s}) \mbox{
and } (E_{r/s}E_{m/n}, E_{r/s})$$ are pairs of primitive
associates since we can retrieve the original pair
$(E_{m/n},E_{r/s})$ from any of them. \vskip .1in
\item
Since $E_{p/q}=E_{r/s}E_{m/n}$ or $E_{p/q}=E_{m/n}E_{r/s}$
we have proved the theorem if $p'/q'$ is one of the
distinguished neighbors. \vskip .1in
\end{enumerate}
\item If $p'/q'$ is not one of the distinguished neighbors, then either
$p'/q'=(tp+m)/(tp+n)$ for some $t>0$ or $p'/q'=(tp+r)/(tp+s)$
for some $t>0$. \vskip .1in \begin{enumerate} \item Assume for
definiteness $p'/q'=(tp+m)/(tp+n)$; the argument is the same in
the other case. \vskip .1in
 \item Note that the pairs $p/q, (jp+m)/(jp+n)$ are neighbors
for all $j=1, \ldots t$. \vskip .1in
\item We have already
shown $E_{m/n},E_{p/q}$ is a pair of primitive associates.
The argument above applied to this pair shows that
$E_{m/n},E_{(p+m)/(q+m)}$ is also a pair of primitive
associates. Applying the argument $t$ times proves the
theorem for the pair $E_{p'/q'},E_{p/q}$.
\end{enumerate}
\end{enumerate}
\end{proof}
An immediate corollary is
\begin{cor} The scheme of Theorem \ref{thma4} also gives a scheme for enumerating only
primitive palindromes  and a scheme for enumerating only  primitives
that are canonical palindromic products.
\end{cor}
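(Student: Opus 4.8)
The plan is to derive the corollary by splitting the index set of the scheme of Theorem~\ref{thma4} according to the parity of $pq$ and verifying completeness of each half. By Theorem~\ref{thma4}, whenever $pq$ is even $E_{p/q}$ is a cyclically reduced primitive palindrome which is the unique palindrome in its conjugacy class, and whenever $pq$ is odd $E_{p/q}$ is the canonical product $E_{m/n}E_{r/s}$ of its two parent palindromes. By Corollary~\ref{cor:ratlprim}, distinct rationals yield primitives that are, up to inverse, distinct. So $\{E_{p/q}: pq\text{ even}\}$ is a repetition-free list of primitive palindromes and $\{E_{p/q}: pq\text{ odd}\}$ is a repetition-free list of canonical palindromic products; it remains only to see that the first list already contains, up to inverse, every primitive palindrome, from which it follows automatically that the second accounts for all the remaining primitives.

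The completeness step is driven by a parity computation on abelianizations. Writing $\mathrm{ab}\colon F\to\ZZ^2$ for abelianization, an induction on Farey level using the recursions $E_{p/q}=E_{r/s}E_{m/n}$ or $E_{m/n}E_{r/s}$ together with $p/q = m/n\oplus r/s$ and the base values shows that the two coordinates of $\mathrm{ab}(E_{p/q})$ have the parities of $q$ and of $p$, so both are odd exactly when $pq$ is odd. On the other hand a freely reduced nontrivial palindrome $w=x_1\cdots x_n$ (i.e. $x_i=x_{n+1-i}$) is automatically cyclically reduced, since $x_n=x_1\ne x_1^{-1}$; and if $n$ is even its letters split into symmetric pairs of equal letters, forcing both coordinates of $\mathrm{ab}(w)$ to be even, which is impossible for a primitive. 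Hence a primitive palindrome has odd length and exactly one coordinate of its abelianization odd, and since $\mathrm{ab}$ is a conjugacy invariant the same is true of anything conjugate to a primitive palindrome.

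Now let $W$ be any primitive palindrome. By Corollary~\ref{cor:ratlprim} one of $W$, $W^{-1}$ is conjugate to $E_{p/q}$ for a unique rational $p/q$, and as $W^{-1}$ is also a palindrome we may take it to be $W$. Then $\mathrm{ab}(E_{p/q})=\mathrm{ab}(W)$ is not both-odd, so $pq$ is even; by Theorem~\ref{thma4}, $E_{p/q}$ is the unique palindrome in its conjugacy class, so the cyclically reduced palindrome $W$, being a cyclic permutation of $E_{p/q}$, equals $E_{p/q}$. Thus the $pq$-even rationals enumerate every primitive palindrome (up to inverse) and the $pq$-odd rationals enumerate exactly the primitives not conjugate to any palindrome, each displayed by Theorem~\ref{thma4} in its canonical form as a product of two palindromes already on the list; in particular this recovers, in sharp form, the classical fact that a primitive is conjugate to a palindrome precisely when $pq$ is even. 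I expect the only real work to be the completeness step --- the two parity computations --- while the rest is bookkeeping from Theorem~\ref{thma4} and Corollary~\ref{cor:ratlprim}.
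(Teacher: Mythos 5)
Your proposal is correct, but it does substantially more than the paper, which offers no proof at all: the corollary appears there as ``an immediate corollary,'' meaning only that one reads off from Theorem~\ref{thma4} that restricting the scheme to the rationals with $pq$ even produces nothing but palindromes and restricting to $pq$ odd produces nothing but the canonical two-palindrome products. What you add --- and what the paper leaves implicit --- is the completeness of the $pq$-even sublist among \emph{all} primitive palindromes. Your two parity computations are sound: the recursion $E_{p/q}=E_{m/n}E_{r/s}$ or $E_{r/s}E_{m/n}$ together with $p=m+r$, $q=n+s$ and the base cases does give, in the abelianization, an $a$-exponent of parity $q$ and a $b$-exponent of parity $p$ (one checks this against $E_{1/2}=A^{-1}BA^{-1}$, $E_{2/5}=A^{-1}BA^{-3}BA^{-1}$, etc.); and a freely reduced palindrome of even length does abelianize to a vector with both coordinates even, which no primitive can do, so primitive palindromes have odd length and land in the $pq$-even classes. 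Combined with the uniqueness-of-the-palindrome-in-its-conjugacy-class clause of Theorem~\ref{thma4} and Corollary~\ref{cor:ratlprim}, this shows the $pq$-even words exhaust the primitive palindromes up to inversion, and hence that the $pq$-odd words account for exactly the remaining primitive conjugacy classes. So your argument is a legitimate strengthening rather than a detour; the only caution is that you are proving a sharper statement (an exhaustive enumeration of primitive palindromes, and the ``conjugate to a palindrome iff $pq$ even'' dichotomy) than the one-line reading the authors intended.
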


\newpage

\section{Examples} \label{section:EX3}

%\begin{comment}

We compute some examples: \vskip .3in

\begin{tabular}{|c|c|c|c|c|c|}

  \hline

  %after \\: \hline or \cline{col1-col2} \cline{col3-col4} \cline{col5-col6}...
\hline

Fraction & Parents & Parity & $E_{p/q}$ & Parental Product & simplified \\
                 \hline
  $1/2$ & $0/1 \oplus 1/1$  & even   & $E_{1/2}$ & $A^{-1} \cdot BA^{-1}$ & $A^{-1}BA^{-1}$   \\  \hline
  $2/1$ & $1/1 \oplus 1/0$ & even & $E_{2/1}$  & $BA^{-1} \cdot B$  & $BA^{-1}B$ \\            \hline
  $1/3$ & $1/2 \oplus 0/1$ & odd & $E_{1/3}$ & $A^{-1}BA^{-1} \cdot A^{-1}$ & $A^{-1}BA^{-2}$ \\                   \hline
    $2/5$ & $1/3 \oplus 1/2$ & even & $E_{2/5}$ & $A^{-1}BA^{-2}\cdot A^{-1}BA^{-1}$ & $A^{-1}BA^{-3}BA^{-1}$ \\       \hline
  $1/4$ & $1/3 \oplus 0/1$ & even & $E_{1/4}$ & $A^{-1} \cdot A^{-1}BA^{-2}$ & $A^{-2}BA^{-2}$ \\    \hline
  $2/7$ & $1/4 \oplus 1/3$ & even  & $E_{2/7}$ & $A^{-2}BA^{-2} \cdot A^{-1}BA^{-2}$ &  $A^{-2}BA^{-3}BA^{-2}$  \\ \hline
\end{tabular}

\vskip .3in

Let us see how the word in the enumeration scheme compares with the
word we get from the corresponding $\F-$ sequence. In
section~\ref{sec:example}, the  $\F-$word of $31/9=[3,2,4]$  was
$A^{-1}B^3 \cdot (BA^{-1}B^3A^{-1}B^3)^4$.

To find the word $E_{31/9}$  note that the distinguished Farey neighbors are $[3;2]=7/2$ and $[3;2,3]=24/7$.    We form the following words indicating $E_{31/9}$ and its neighbors in boldface.

$$ E_{1/1}=A^{-1}B, E_{2/1}=BA^{-1}B. E_{3/1}=E_{0/1}E_{2/0} = B\cdot BA^{-1}B,$$ $$ E_{4/1}=E_{3/1}E_{0/1}=BBA^{-1}B \cdot B $$
$${\bf E_{7/2}}=E_{3/1}E_{4/1}={\bf BBA^{-1}B\cdot BBA^{-1}BB}, $$ $$ E_{10/3}=E_{3/1}E_{7/2}=B^2A^{-1}B^3A^{-1}B^3A^{-1}B^2, $$
$$E_{17/5}=E_{7/2}E_{10/3}=B^2A^{-1}B^3A^{-1}B^2 \cdot   B^2A^{-1}B^3A^{-1}B^3A^{-1}B^2, $$
 $${\bf E_{24/7}}=E_{17/5}\cdot E_{7/2} ={\bf B^2A^{-1}B^3A^{-1}B^2 \cdot   B^2A^{-1}B^3A^{-1}B^3A^{-1}B^2 \cdot B^2A^{-1}B^3A^{-1}B^2}$$
$${\bf E_{31/9}}= E_{7/2} E_{24/7}= $$
$${\bf B^2A^{-1}B^3A^{-1}B^2 \cdot B^2A^{-1}B^3A^{-1}B^2 \cdot   B^2A^{-1}B^3A^{-1}B^3A^{-1}B^2 \cdot B^2A^{-1}B^3A^{-1}B^2}$$

Conjugate by $B^{-1}$ and regroup to get
$$BA^{-1}B^3A^{-1}B^3 \cdot BA^{-1}B^3A^{-1}B^3 \cdot   BA^{-1}B^3A^{-1}B^3\cdot A^{-1}B^3 \cdot BA^{-1}B^3A^{-1}B^3.$$
Finally conjugate by $(BA^{-1}B^3A^{-1}B^3)^{-3}$ to obtain the  $\F-$word of $31/9=[3,2,4]$.

\section{Farey Diagram Visualization}
 We can visualize the relation between primitive pairs and neighboring rationals using the Farey diagram.
 Suppose the stopping generators $(C,D)$  correspond to $(0/1,1/0)$ as above, and the primitive
 pair $(A,B)$ corresponds to $(r/s,p/q)$.   Note that we have done this so that the Farey level of
 $p/q$ is greater than that of $r/s$ and $r/s$ is   the parent of $p/q$ with lowest Farey level.
 (The other parent is $(r-p)/(q-s)$ and corresponds to $A^{-1}B$.) Draw the   curve $\gamma$ from a point on
 the imaginary axis to $p/q$.  If $p/q$ is positive, orient it toward $p/q$; if $p/q$ is negative,
 orient it towards the imaginary axis.

 The left-right sequence, the continued fraction for $p/q$ and the $\F$ winding sequence whose
 last word is $B$ are all the same.  Traversing the curve in the other direction reverses
 left and right and gives the unwinding sequence.    The symmetry about the imaginary axis is
 reflected in our definition of negative continued fractions.

 Given two primitive pairs $(A,B)$ an $(A'B')$ such that $B$
corresponds to $p/q$ and $B'$ corresponds to $p'/q'$ draw the curves
$\gamma$ and $\gamma'$.
 We can find the sequence to go from $(A,B)$ to $(A',B')$ by traversing $\gamma$ from $p/q$ to the
 imaginary axis and then traversing $\gamma'$ to $p'/q'$.   We can also draw an oriented curve
 from $p/q$ to $p'/q'$ and read off the left-right sequence along this curve to get  the $\F-$ sequence
 that gives $(A',B')$ as words in $(A,B)$ directly. We have

 \begin{cor} Given any two sets of primitive pairs, $(A,B)$
 and $(A',B')$ there is an $\F$-sequence containing either only
 positive or only negative integers that connects one pair to the
 other.
 \end{cor}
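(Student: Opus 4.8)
The plan is to reduce the statement to the earlier structural results, namely Theorem~\ref{thm:winding} together with Corollary~\ref{cor:all}, and to use the Farey-diagram dictionary developed just above. The starting observation is that every primitive pair in $F$ is, up to the identification in Corollary~\ref{cor:all}, realized as a pair of generators $(A,B)$ of a discrete free subgroup $G \subset PSL(2,\mathbb R)$ with disjoint axes, with $B$ corresponding to a rational $p/q$ and its distinguished parent $r/s$ corresponding to $A^{-1}B$ (or the appropriate variant), via a winding $\F$-sequence whose entries are the continued-fraction entries of $p/q$. So to connect $(A,B)$ to $(A',B')$ it suffices to connect their associated rationals $p/q$ and $p'/q'$ inside the Farey tessellation by a path that records an $\F$-sequence.

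First I would fix the base point $(C,D)$ corresponding to $(0/1,1/0)$ — the stopping generators, which by Theorem~\ref{thm:winding} are intrinsic to the group and do not depend on which primitive pair we started from. Then both $(A,B)$ and $(A',B')$ are obtained from $(C,D)$ by winding $\F$-sequences; equivalently, traversing the oriented geodesic $\gamma$ from the imaginary axis out to $p/q$ reads off the winding sequence producing $(A,B)$, and reversing $\gamma$ reads off the unwinding sequence back to $(C,D)$. So the concatenated path ``$\gamma$ reversed, then $\gamma'$'' goes from $p/q$ back to a point on the imaginary axis and then out to $p'/q'$, and reading the left--right sequence along this concatenated curve yields an $\F$-sequence carrying $(A,B)$ to $(A',B')$. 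This is exactly the recipe described in the paragraph preceding the corollary, so the content is to check that the left--right data along such a concatenated curve is genuinely an $\F$-sequence in the sense of Definition in section~\ref{section:Fsequences}.

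The one genuine point to verify is the sign assertion: the resulting $\F$-sequence has either all entries nonnegative or all entries nonpositive, never a mix. Here I would use the symmetry of the Farey tessellation about the imaginary axis, which is exactly why the negative-continued-fraction convention of section~\ref{sec:cf} was adopted. If $p/q$ and $p'/q'$ lie on the same side of the imaginary axis, the curve from one to the other can be taken not to cross the axis, and the left--right sequence keeps a constant sign; reflecting if necessary we may assume that side is the positive one. If they lie on opposite sides, the reflection in the imaginary axis conjugates the whole picture, interchanging $A \leftrightarrow A^{-1}$ and correspondingly negating the $\F$-sequence, so after applying the reflection to one of the two pairs we are back in the same-side case and obtain a sequence of one consistent sign. (If one of the rationals is $0/1$ or $1/0$ the statement is immediate from the base case.)

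I expect the main obstacle to be bookkeeping rather than anything deep: namely, being careful that the ``$\gamma$ reversed then $\gamma'$'' curve, after a small homotopy rel endpoints to put it in general position with respect to the tessellation, really crosses triangles in a way whose left--right record satisfies the nonvanishing condition on interior entries of an $\F$-sequence (all $a_i \neq 0$ for $i>0$), and that when the two curves share an initial segment along the imaginary axis the cancellation in the middle of the sequence is handled correctly — this is the analogue of the cancellation one sees concretely in Example~\ref{example:example}. Once one is willing to invoke the winding/unwinding correspondence of Theorem~\ref{thm:winding} as a black box, the corollary is essentially a restatement of it applied twice with a common base, and the sign dichotomy is precisely the same-side/opposite-side dichotomy for the imaginary axis.
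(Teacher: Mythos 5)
Your proposal is correct and follows essentially the same route as the paper, which proves this corollary implicitly via the discussion immediately preceding it: connect the rationals $p/q$ and $p'/q'$ by the curves $\gamma$, $\gamma'$ (or a single oriented curve between them), read off the left--right sequence as the $\F$-sequence, and invoke the winding/unwinding correspondence of Theorem~\ref{thm:winding} together with the reflection symmetry about the imaginary axis to get the single-sign conclusion. Your extra care about the nonvanishing of interior entries and the cancellation of shared initial segments is a reasonable tightening of a point the paper leaves informal, but it is not a different argument.
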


\vskip .1in  We thank Vidur Malik who coined the terms winding and
unwinding steps and whose thesis \cite{Malik} suggested that we look
at palindromes.

\bibliographystyle{amsplain}

\end{document}